\begin{document}


\newcommand{\refe}[1]{(\ref{#1})}
\newcommand{\bq}{\begin{equation}}
\newcommand{\eq}{\end{equation}}
\newcommand{\ba}{\begin{array}}
\newcommand{\ea}{\end{array}}
\newcommand{\ZZ}{{\mathbb Z} }
\newcommand{\NN}{{\mathbb N}}
\newcommand{\CC}{{\mathbb C}}
\newcommand{\RR}{{\mathbb R}}
\newcommand{\btd}{\nabla}
\newcommand{\btu}{\Delta}
\newcommand{\dst}{\displaystyle}
\newcommand{\tsig}{\widetilde{\sigma}}
\newcommand{\ttau}{\widetilde{\tau}}


\theoremstyle{plain}
\newtheorem{teo}{Theorem}[section]
\newtheorem{cor}[teo]{Corollary}
\newtheorem{lema}[teo]{Lemma}
\newtheorem{exa}[teo]{Example}
\newtheorem{prop}[teo]{Proposition}
\newtheorem{rem}[teo]{Remark}


\title[Special Functions on the linear-type lattices]{On the Properties of Special
Functions on the linear-type lattices\footnote{May 15, 2011}}
\author{R. \'{A}lvarez-Nodarse}
\address{IMUS \& Departamento de An\'alisis Matem\'atico, Facultad de
Matem\'atica, Universidad de Sevilla. Apdo. Postal 1160, Sevilla,
E-41080, Sevilla, Spain}
\email{address ran@us.es}
\author{J. L. Cardoso}
\address{CM-UTAD \& Departamento de Matem\'atica, Universidade de
Tr\'as-os-Montes e Alto Dou\-ro. Apartado 202, 5001 - 911 Vila Real,
Portugal}
\email{jluis@utad.pt}
\keywords{$q$-hypergeometric functions, difference
equations, recurrence relations, $q$-polynomials}

\begin{abstract}
We present a general theory for studying the difference analogues
of special functions of hypergeometric type on the linear-type
lattices, i.e., the solutions of the second order linear
difference equation of hypergeometric type on a special kind of
lattices: the linear type lattices. In particular, using the
integral representation of the solutions we obtain several
difference-recurrence relations for such functions. Finally, applications
to $q$-classical polynomials are given.
\end{abstract}
\maketitle
\subjclass{2000 Mathematics Subject Classification }{33D15, 33D45}

\section{Introduction}

The study of the so-called $q$-special functions has known an
increasing interest in the last years due its connection with several
problems in mathematics and mathematical-physics
(see e.g. \cite{ran04,aar,gr,nsu,vk}). A systematic study starting from the
second order linear difference equation that such
functions satisfy was started by Nikiforov and Uvarov in 1983
and further developed by Atakishiyev and Suslov (for a very nice reviews
see e.g. \cite{ARS,nsu,suslov}). Of particular interest is the so-called
$q$-classical polynomials (see e.g. \cite{ran-med}) introduced by
Hahn in 1949 which are polynomials on the lattice $q^s$.

Our main aim in this paper is to present a constructive approach
for generating recurrence relations and ladder-type operators for
the difference analogues of special functions of hypergeometric type
on the linear-type lattices. Here we will focus our attention on functions defined
on the $q$-linear lattice (for the linear lattice $x(s)=s$ see \cite{ran-luis}
and references therein, and for the continuous case see e.g. \cite{YDN:94}).
Therefore we will complete the work started in \cite{suslov} where
few recurrence relations where obtained. In fact we will prove, by using
the $q$-analoge of the technique introduced in \cite{ran-luis} for the
discrete case (uniform lattice), that the solutions (not only the polynomial ones)
of the difference equation on the $q$-linear lattice $x(s)=c_1q^s+c_2$
satisfy a very general recurrent-difference relation from where
several well known relations (such as the three-term recurrence relation
and the ladder-type relations) follow.

The structure of the paper is as follows: In section 2 the needed
results and notations from the $q$-special function theory are
introduced. In sections 3 and 4 the general theorems for obtaining recurrences
relations are presented. In section 5 the special case of classical
$q$-polynomials are considered in details and some examples
are worked out in details.

\section{Some preliminar results}

Here we collect the basic background \cite{ran,nsu,suslov} on
$q$-hypergeometric functions needed  in the rest of the work.

The hypergeometric functions on the non-uniform lattice $x(s)$
are the solutions of the second order linear difference
equation of hypergeometric type on non-uniform lattices
\begin{equation}
\begin{array}{c}
\dst \sigma(s) \frac{\Delta}{\Delta x(s-\frac{1}{2})}
\left[\frac{\nabla y(s)}{\nabla x(s)}\right] + \tau(s)
\frac{\Delta y(s)}{\Delta x(s)} + \lambda y(s) =0, \\[5mm]
\sigma(s)=\widetilde{\sigma}(x(s)) - \frac{1}{2}\widetilde{\tau}(x(s))
\Delta x\left(\mbox{$s-\frac{1}{2}$}\right), \quad
\tau(s)=\widetilde{\tau}(x(s)),
\end{array}
\label{difeq-q}
\end{equation}
where $\Delta y(s):= y(s+1)-y(s)$, $\nabla y(s):= y(s)-y(s-1)$,
are the forward and backward difference operators, respectively;
$\tsig(x(s))$ and $\ttau(x(s))$ are polynomials in $x(s)$ of
degree at most 2 and 1, respectively, and $\lambda$ is a constant.
Here we will deal with the linear and $q$-linear lattices, i.e.,
lattices of the form
\bq\label{red}
x(s)=c_1 s+c_2\quad\textrm{or}\quad x(s)=c_1(q)q^{s}+c_2(q),
\eq
respectively, with $\,c_1\neq 0\,$ and $\,c_1(q)\neq 0\,$.

We will define the {\em k-order difference derivative} of a solution
$y(s)$ of \refe{difeq-q} by
$$
y^{(k)}(s) := \btu^{(k)} [  y(s) ]=\frac{\btu}{\btu x_{k-1}(s)}
\frac{\btu}{\btu x_{k-2}(s)} \dots
\frac{\btu}{\btu x(s)} [ y(s) ] ,
$$
where $x_\nu(s)=x(s+\mbox{\footnotesize $\frac{\nu}{2}$})$. It is known \cite{nsu}
that $y^{(k)}(s) $ also satisfy a difference equation of the same type.
Moreover, for the solutions of the difference equation \refe{difeq-q}
the following theorem holds  
\begin{teo}{\cite{nsu1986,suslov}} The difference equation
\refe{difeq-q} has a particular solution of the form
\bq
\label{sol-d}
y_\nu(z)=\frac{C_\nu}{\rho(z)}\sum_{s=a}^{b-1}
\frac{\rho_\nu(s)\nabla x_{\nu+1}(s)}{[x_\nu(s)-x_\nu(z)]^{(\nu+1)}},
\eq
if the condition
$$
\left.\frac{\sigma(s)\rho_\nu(s)\nabla x_{\nu+1}(s)}
{[x_{\nu-1}(s)-x_{\nu-1}(z+1)]^{(\nu+1)}}\right|_a^b=0,
$$
is satisfied, and of the form
\bq
\label{sol-c}
y_\nu(z)=\frac{C_\nu}{\rho(z)}\int_{C}
\frac{\rho_\nu(s)\nabla x_{\nu+1}(s)}{[x_\nu(s)-x_\nu(z)]^{(\nu+1)}}ds,
\eq
if the condition
\bq
\label{con-c}
\int_C\Delta_s\frac{\sigma(s)\rho_\nu(s)\nabla x_{\nu+1}(s)}
{[x_{\nu-1}(s)-x_{\nu-1}(z+1)]^{(\nu+1)}}=0,
\eq
is satisfied. Here $C$ is a contour in the complex plane,
$C_\nu$ is a constant, $\rho(s)$ and $\rho_\nu(s)$ are the
solution of the Pearson-type equations
\bq\begin{split}
\frac{\rho(s+1)}{\rho(s)}=&\frac{\sigma(s)+\tau(s)\Delta
x(s-\frac{1}{2})}{\sigma(s+1)}=\frac{\phi(s)}{\sigma(s+1)},\\
\frac{\rho_\nu(s+1)}{\rho_\nu(s)}=&\frac{\sigma(s)+\tau_\nu(s)\Delta
x_\nu(s-\frac{1}{2})}{\sigma(s+1)}=\frac{\phi_\nu(s)}{\sigma(s+1)},
\end{split}
\label{pearson}
\eq
where
\bq
\tau_\nu(s)=\frac{\sigma(s+\nu)-\sigma(s)+\tau(s+\nu)\btu x(s+\nu-\frac{1}{2})}
{\btu x_{\nu-1}(s)},
\label{tau_n}
\eq
$\nu$ is the root of the equation
\bq \lambda_\nu +[\nu]_q \left\{
 \alpha_q(\nu-1)  \widetilde{\tau}' +
[\nu-1]_q \frac{\widetilde{\sigma}''}2  \right\}=0,
\label{lambda-q}
\eq
and $[\nu]_q$ and $\alpha_q(\nu)$ are the $q$-numbers
\bq\label{q-num}
[ \nu ]_q = \frac{q^{\nu/2} - q^{-\nu/2}}{q^{1/2} - q^{-1/2}},\qquad
\alpha_q(\nu) = \frac{q^{\nu/2} + q^{-\nu/2}}2,\quad
\forall\,\nu\in\CC,
\eq
respectively. The generalized powers $[x_{k}(s)-x_{k}(z)]^{(\nu)}$ are defined by
\bq\label{gen-pow-q}
[x_{k}(s)-x_{k}(z)]^{(\nu)}=(q-1)^{\nu} c_1^{\nu} q^{\nu(k-\nu+1)/2}q^{\nu z}
\frac{\Gamma_q(s-z+\nu)}{\Gamma_q(s-z)},\quad \nu\in\RR,
\eq
for the $\,q$-linear (exponential) lattice $x(s)=c_1q^s+c_2$ and
$$
[x_{k}(s)-x_{k}(z)]^{(\nu)}= c_1^{\nu}
\frac{\Gamma(s-z+\mu)}{\Gamma(s-z)},\quad \nu\in\RR,
$$
for the linear lattice $x(s)=c_1 s+c_2$, respectively.
For the definitions of the Gamma and the $\,q$-Gamma functions see, for
instance, \cite{aar}.
\end{teo}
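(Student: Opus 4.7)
My plan is to verify directly that the sum \refe{sol-d} and the integral \refe{sol-c} solve the hypergeometric-type equation \refe{difeq-q}, following the strategy of Nikiforov, Uvarov and Suslov. The essential ingredients are (i) the self-adjoint reformulation provided by the Pearson equations \refe{pearson}, and (ii) the discrete calculus of the generalized powers $[x_k(s)-x_k(z)]^{(\nu)}$.

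First I would multiply \refe{difeq-q} by $\rho(z)$ and use \refe{pearson} to rewrite the equation in the divergence form
$$
\frac{\btu}{\btu x(z-\tfrac{1}{2})}\!\left[\sigma(z)\rho(z)\frac{\btd y(z)}{\btd x(z)}\right]+\lambda\,\rho(z)\,y(z)=0.
$$
From \refe{gen-pow-q} together with the identity $\Gamma_q(u+1)=[u]_q\Gamma_q(u)$ I would then establish the basic ``power-lowering'' identities: both $\btu_s/\btu x_k(s)$ and $\btd_z/\btd x_k(z)$ applied to $[x_k(s)-x_k(z)]^{(\nu)}$ drop the exponent to $\nu-1$ and shift the lattice by $\pm 1$ in a controlled way, with the $z$-derivative carrying an extra sign. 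Combined with the factorization $[x_\nu(s)-x_\nu(z)]^{(\nu+1)}=[x_{\nu-1}(s)-x_{\nu-1}(z+1)]\cdot[x_{\nu-1}(s)-x_{\nu-1}(z)]^{(\nu)}$, these are the exact lattice analogues of the classical relation $\tfrac{d}{dz}(s-z)^{-\nu-1}=(\nu+1)(s-z)^{-\nu-2}$.

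Next I would substitute the ansatz \refe{sol-d} into the self-adjoint equation and pull the two $z$-differences inside the sum. The power-lowering rules convert the kernel $1/[x_\nu(s)-x_\nu(z)]^{(\nu+1)}$ into a new kernel attached to the shifted lattice $x_{\nu-1}$; after using the Pearson relation for $\rho_\nu$ and the definition \refe{tau_n} of $\tau_\nu$, the summand regroups as
$$
\btu_s\!\left\{\frac{\sigma(s)\rho_\nu(s)\,\btd x_{\nu+1}(s)}{[x_{\nu-1}(s)-x_{\nu-1}(z+1)]^{(\nu+1)}}\right\}+c(\nu,\lambda)\cdot\frac{\rho_\nu(s)\,\btd x_{\nu+1}(s)}{[x_\nu(s)-x_\nu(z)]^{(\nu+1)}},
$$
where the scalar $c(\nu,\lambda)$ equals precisely the left-hand side of \refe{lambda-q}. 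The hypothesis that $\nu$ is a root of \refe{lambda-q} kills this residual, so the entire summand reduces to a telescoping $s$-difference.

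Summation by parts over $s\in\{a,\dots,b-1\}$ then collapses the telescoping sum into the boundary expression stated in the theorem, which vanishes by assumption; the argument for \refe{sol-c} is identical after replacing the finite sum by a contour integral and the boundary condition by \refe{con-c}. The main technical obstacle is the third step: the careful $s,z$ bookkeeping needed to fold two $z$-differences of $[x_\nu(s)-x_\nu(z)]^{(\nu+1)}$ into a single $s$-difference of a structurally identical quantity. Arranging the shift $z\mapsto z+1$ and the lattice reduction $x_\nu\mapsto x_{\nu-1}$ to appear in exactly the combination demanded by the stated boundary condition is where the delicate algebra lives, and it is precisely this matching that forces \refe{lambda-q} as the eigenvalue condition on $\nu$.
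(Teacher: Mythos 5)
The paper does not prove this theorem: it is imported verbatim from \cite{nsu1986,suslov}, so there is no internal proof to compare yours against. Your outline reproduces the standard Nikiforov--Uvarov--Suslov verification, and its architecture is sound: pass to the self-adjoint form via \refe{pearson}, differentiate under the sum (resp.\ integral) using the calculus of generalized powers, regroup the resulting summand via the Pearson equation for $\rho_\nu$ into a total $s$-difference plus a residual multiple of the original kernel, observe that the residual coefficient vanishes exactly when $\nu$ satisfies \refe{lambda-q}, and let the stated boundary condition dispose of the telescoped term. This is also precisely the machinery the paper later builds for its own results: the $z$-differentiation step is the content of \refe{2.4}, and the reduction of a summand to $\Delta_s$ of a quantity shaped like the boundary condition is the argument around \refe{3.8}--\refe{eq-Pi-Q} in the proof of Lemma \ref{lema1}.

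Two caveats. First, the factorization you invoke, $[x_\nu(s)-x_\nu(z)]^{(\nu+1)}=[x_{\nu-1}(s)-x_{\nu-1}(z+1)]\cdot[x_{\nu-1}(s)-x_{\nu-1}(z)]^{(\nu)}$, is not correct for the $q$-linear lattice: checking against \refe{gen-pow-q}, the degree-one factor that splits off a generalized power of degree $\nu+1$ on the lattice $x_\nu$ is $x_\nu(s)-x_\nu(z-\nu)$, and passing to the lattice $x_{\nu-1}$ introduces extra powers of $q$ and different argument shifts; the correct identities are items (2)--(5) of Proposition \ref{prop1}. Since the entire proof is an exercise in getting exactly these shifts right --- you yourself identify this as where ``the delicate algebra lives'' --- an incorrect factorization at this point would derail the computation rather than merely complicate it. Second, the step you flag as the main technical obstacle \emph{is} the proof: asserting that the summand ``regroups as'' a total $s$-difference plus $c(\nu,\lambda)$ times the kernel, without exhibiting that $s$-difference, leaves the argument as a plan rather than a demonstration. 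Both issues are repairable with Proposition \ref{prop1} and the Pearson relation \refe{pearson} in hand, but as written the proposal is an accurate road map, not a complete proof.
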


\begin{rem} For the special case when $\nu\in\NN$, the
 generalized powers become
\[\begin{split}
[x_{k}(s)-x_{k}(z)]^{(n)}=&(-1)^{n} c_1^{n} q^{-n(n-1)/2}q^{n(z+k/2)}(q^{s-z};q)_n,\\
[x_{k}(s)-x_{k}(z)]^{(n)}=&c_1^n(s-z)_n,
\end{split}
\]
for $q$-linear and linear lattices, respectively.
\end{rem}
We will need the following straightforward proposition which proof we omit here
(see e.g. \cite{ran,suslov})
\begin{prop}\label{prop1}
Let $\,\mu\,$ and $\,\nu\,$ be complex numbers and $\,m\,$ and $\,k\,$ be
positive integers with $\,m\geq k\,.$ For the q-linear lattice $\:x(s)=c_1q^s+c_2\:$
we have
\begin{enumerate}
\item $\displaystyle\frac{\left[x_{\mu}(s)-x_{\mu}(z)\right]^{(m)}}
{\left[x_{\nu}(s)-x_{\nu}(z)\right]^{(m)}}
=q^{\frac{m(\mu-\nu)}{2}}\,,$
\item $\displaystyle\frac{\left[x_{\mu}(s)\!-\!x_{\mu}(z)\right]^{(m)}}
{\left[x_{\mu}(s)-x_{\mu}(z)\right]^{(k)}}
=\left[x_{\mu}(s)-x_{\mu}(z\!-\!k)\right]^{(m\!-\!k)}\,,$
\item $\displaystyle\frac{\left[x_{\mu}(s)-x_{\mu}(z)\right]^{(m)}}
{\left[x_{\nu}(s)-x_{\nu}(z)\right]^{(k)}}
=q^{\frac{k(\mu\!-\!\nu)}{2}}\left[x_{\mu}(s)\!-\!x_{\mu}
(z\!-\!k)\right]^{(m\!-\!k)}\,,$
\item $\dst\frac{\left[x_{\mu}(s)-x_{\mu}(z)\right]^{(m+1)}}
{\left[x_{\mu-1}(s+1)-x_{\mu-1}(z)\right]^{(m)}}=
x_{\mu-m}(s)-x_{\mu-m}(z)\,,$
\item $\dst\frac{\left[x_{\mu}(s)-x_{\mu}(z)\right]^{(m+1)}}
{\left[x_{\mu-1}(s)-x_{\mu-1}(z)\right]^{(m)}}=
x_{\mu-m}(s+m)-x_{\mu-m}(z)\,.$
\end{enumerate}
\end{prop}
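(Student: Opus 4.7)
The plan is to verify each of the five identities by direct substitution of the explicit formula \refe{gen-pow-q} for $[x_{k}(s)-x_{k}(z)]^{(\nu)}$ in terms of the $q$-Gamma function, treating the relations as algebraic identities in that one representation.

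Identities (1) and (2) I would dispatch by cancellation. In (1) the numerator and denominator share the same $s$, $z$ and power $m$, so only the prefactor $q^{m(k-m+1)/2}$ depends on $k$; the quotient collapses to $q^{m(\mu-\nu)/2}$. In (2) the lattice index $\mu$ is common, so the $\Gamma_{q}$-quotient reduces to $\Gamma_{q}(s-z+m)/\Gamma_{q}(s-z+k)$, and it remains to check that the accumulated exponent of $q$, together with the factor $(q-1)^{m-k}c_{1}^{m-k}$, agrees with the right-hand side obtained from \refe{gen-pow-q} under the substitution $\nu\mapsto m-k$, $z\mapsto z-k$. This is a short bookkeeping exercise on the exponent $\tfrac{m(\mu-m+1)-k(\mu-k+1)}{2}+(m-k)z$ versus $\tfrac{(m-k)(\mu-m+k+1)}{2}+(m-k)(z-k)$. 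Identity (3) is then just (1) composed with (2): factor the quotient as $\bigl([x_{\mu}(s)-x_{\mu}(z)]^{(m)}/[x_{\mu}(s)-x_{\mu}(z)]^{(k)}\bigr)\cdot\bigl([x_{\mu}(s)-x_{\mu}(z)]^{(k)}/[x_{\nu}(s)-x_{\nu}(z)]^{(k)}\bigr)$ and apply the previous two identities.

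For (4) and (5) the only new ingredient is the functional equation $\Gamma_{q}(x+1)=\frac{1-q^{x}}{1-q}\,\Gamma_{q}(x)$, which collapses the $\Gamma_{q}$-ratio to a single factor: $1-q^{s-z}$ for (4), arising from the shift $s\mapsto s+1$ inside the denominator, and $1-q^{s-z+m}$ for (5), where no such shift occurs. After collecting the powers of $q$ and $c_{1}$, both computations leave $c_{1}q^{(\mu-m)/2}(q^{s}-q^{z})$ and $c_{1}q^{(\mu-m)/2}(q^{s+m}-q^{z})$ respectively. Since $x_{k}(s)=c_{1}q^{s+k/2}+c_{2}$, these are precisely $x_{\mu-m}(s)-x_{\mu-m}(z)$ and $x_{\mu-m}(s+m)-x_{\mu-m}(z)$, as desired.

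The whole argument is thus a consequence of \refe{gen-pow-q} and the $q$-Gamma recurrence; the only real obstacle is bookkeeping of the exponent $\nu(k-\nu+1)/2$ when $\nu$ and $k$ shift simultaneously, as in (2) or in (4)/(5) where the power $m+1$ and the lattice index $\mu$ move together. A careful table of the exponents of $q$, $(q-1)$ and $c_{1}$ on both sides of each identity should make every cancellation transparent and keep sign or $\tfrac{1}{2}$ errors under control.
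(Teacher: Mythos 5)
Your proposal is correct. Note that the paper gives no proof of Proposition \ref{prop1} at all --- it is explicitly labelled ``straightforward'' and deferred to \cite{ran,suslov} --- so there is nothing in the text to diverge from; direct substitution of \refe{gen-pow-q} is exactly the verification the authors intend. I checked the two delicate points and they work out: in (2) both exponents reduce to $(m-k)\tfrac{\mu-m-k+1}{2}+(m-k)z$, and in (4)--(5) the leftover factor $(q-1)$ from $(q-1)^{m+1}/(q-1)^m$ combines with $\Gamma_q(x+1)=\tfrac{1-q^x}{1-q}\Gamma_q(x)$ to give $c_1q^{(\mu-m)/2}(q^{s}-q^{z})$, resp. $c_1q^{(\mu-m)/2}(q^{s+m}-q^{z})$, which matches $x_{\mu-m}(s)-x_{\mu-m}(z)$, resp. $x_{\mu-m}(s+m)-x_{\mu-m}(z)$, under $x_k(s)=c_1q^{s+k/2}+c_2$; the reduction of \refe{gen-pow-q} to the formula in the Remark for $\nu=n\in\NN$ confirms that this is the right convention for $\Gamma_q$.
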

To obtain the result for the linear lattice one only has to put in the
above formulas $q=1$.

\section{The general recurrence relation in the linear-type lattices}
In this section we will obtain several recurrence relations for the
solutions \refe{sol-d} and \refe{sol-c} of the difference equation
\refe{difeq-q} in the linear-type lattices \refe{red}. Since the equation
\refe{difeq-q} is linear we can restrict ourselves to the canonical cases
$x(s)=q^s$ and $x(s)=s$.

Let us define the functions\footnote{Obviously
the functions \refe{sol-d} correspond to the functions
\refe{phi-d}, whereas the functions $y_\nu$ given by \refe{sol-c}
correspond to those of \refe{phi-c}.}
\bq
\Phi_{\nu,\mu}(z)=\sum_{s=a}^{b-1}
\frac{\rho_\nu(s)\nabla x_{\nu+1}(s)}{[x_\nu(s)-x_\nu(z)]^{(\mu+1)}}
\label{phi-d}
\eq
and
\bq
\Phi_{\nu,\mu}(z)=\int_C
\frac{\rho_\nu(s)\nabla x_{\nu+1}(s)}{[x_\nu(s)-x_\nu(z)]^{(\nu+1)}}ds.
\label{phi-c}
\eq
Notice that the functions $y_\nu$ and the functions
$\Phi_{\nu,\mu}$ are related by the formula
\bq\label{con-y-phi}
y_\nu(z)=\frac{C_\nu}{\rho(z)}\Phi_{\nu,\nu}(z).
\eq

\begin{lema} For the functions $\Phi_{\nu,\mu}(z)$ the following relation holds
\bq
\nabla_z\:\Phi_{\nu,\mu}(z)=[\mu+1]_ q\nabla x_{\nu-\mu}(z)\Phi_{\nu,\mu+1}(z),
\label{2.4}
\eq
where $[t]_q$ denotes the symmetric $q$-numbers \refe{q-num}.
\end{lema}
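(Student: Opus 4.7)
The plan is to apply $\nabla_z$ termwise to the sum \refe{phi-d}, or equivalently to pass $\nabla_z$ under the integral sign in \refe{phi-c}. Since $\rho_\nu(s)\nabla x_{\nu+1}(s)$ does not depend on $z$, the lemma will follow as soon as one proves the single pointwise identity
\[
\frac{1}{[x_\nu(s)-x_\nu(z)]^{(\mu+1)}}-\frac{1}{[x_\nu(s)-x_\nu(z-1)]^{(\mu+1)}}\;=\;\frac{[\mu+1]_q\,\nabla x_{\nu-\mu}(z)}{[x_\nu(s)-x_\nu(z)]^{(\mu+2)}},
\]
valid for arbitrary $s$, so the entire argument reduces to an algebraic manipulation of generalized powers.

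First I would apply item $(2)$ of Proposition \ref{prop1} twice, in order to isolate a common block in the two denominators, obtaining
\[
[x_\nu(s)-x_\nu(z)]^{(\mu+1)}=(x_\nu(s)-x_\nu(z))\,[x_\nu(s)-x_\nu(z-1)]^{(\mu)},
\]
\[
[x_\nu(s)-x_\nu(z-1)]^{(\mu+1)}=[x_\nu(s)-x_\nu(z-1)]^{(\mu)}\,(x_\nu(s)-x_\nu(z-\mu-1)).
\]
(Both factorizations also follow directly from the Gamma representation \refe{gen-pow-q}, so they remain valid for complex $\mu$.) Subtracting the two reciprocals, the factor $[x_\nu(s)-x_\nu(z-1)]^{(\mu)}$ cancels once and the numerator telescopes to $x_\nu(z)-x_\nu(z-\mu-1)$. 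One further application of item $(2)$, this time with exponent $\mu+2$, recognises the surviving denominator as precisely $[x_\nu(s)-x_\nu(z)]^{(\mu+2)}$, yielding
\[
\nabla_z\,\frac{1}{[x_\nu(s)-x_\nu(z)]^{(\mu+1)}}\;=\;\frac{x_\nu(z)-x_\nu(z-\mu-1)}{[x_\nu(s)-x_\nu(z)]^{(\mu+2)}}.
\]

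It then remains to identify $x_\nu(z)-x_\nu(z-\mu-1)$ with $[\mu+1]_q\,\nabla x_{\nu-\mu}(z)$, which is a purely lattice computation: substituting $x_k(s)=c_1 q^{s+k/2}+c_2$ gives
\[
x_\nu(z)-x_\nu(z-\mu-1)=c_1\,q^{z+(\nu-\mu-1)/2}(q^{(\mu+1)/2}-q^{-(\mu+1)/2}),\quad \nabla x_{\nu-\mu}(z)=c_1\,q^{z+(\nu-\mu-1)/2}(q^{1/2}-q^{-1/2}),
\]
whose ratio is exactly $[\mu+1]_q$ by the definition \refe{q-num}; the linear case $x(s)=c_1s+c_2$ is the limit $q\to1$, where both sides collapse to $c_1(\mu+1)$. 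The main obstacle, such as it is, will be purely clerical: keeping the many shifts and subscripts generated by the successive applications of Proposition \ref{prop1}(2) under control. No analytic subtlety arises, and the integral representation \refe{phi-c} is handled identically once the pointwise identity is established.
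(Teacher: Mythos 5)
Your proposal is correct and follows essentially the same route as the paper: apply $\nabla_z$ termwise, factor out the common block $\left[x_\nu(s)-x_\nu(z-1)\right]^{(\mu)}$ from both generalized powers, combine the resulting simple fractions so the numerator telescopes to $x_\nu(z)-x_\nu(z-\mu-1)$, reassemble the denominator as $\left[x_\nu(s)-x_\nu(z)\right]^{(\mu+2)}$, and identify the numerator with $[\mu+1]_q\,\nabla x_{\nu-\mu}(z)$. The only (minor, favorable) differences are that you justify the factorizations for non-integer $\mu$ via the Gamma representation \refe{gen-pow-q} rather than quoting Proposition \ref{prop1}(2) outside its stated integer hypotheses, and you verify the final lattice identity by direct substitution where the paper invokes $x(s)-x(s-t)=[t]_q\nabla x\left(s-\frac{t-1}{2}\right)$.
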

\begin{proof} We will prove it for the functions \refe{phi-d}. The other case is analogous.
Using \refe{gen-pow-q}, one gets
\[
\begin{split}
\nabla_z\:&\Phi_{\nu,\mu}(z)  =  \displaystyle\sum_{s=a}^{b-1}
\nabla_z\left(\frac{\rho_{\nu}(s)\nabla
x_{\nu+1}(s)}{\left[x_{\nu}(s)-x_{\nu}(z)\right]^{(\mu+1)}}\right) \\
= & \displaystyle\sum_{s=a}^{b-1}\left(\frac{\rho_{\nu}(s)\nabla
x_{\nu+1}(s)}{\left[x_{\nu}(s)-
x_{\nu}(z)\right]^{(\mu+1)}}-\frac{\rho_{\nu}(s)\nabla x_{\nu+1}(s)}{\left[x_{\nu}(s)-
x_{\nu}(z-1)\right]^{(\mu+1)}}\right) \\
= & \displaystyle\sum_{s=a}^{b-1}\frac{\rho_{\nu}(s)\nabla x_{\nu+1}(s)}
{\left[x_{\nu}(s)-x_{\nu}(z-1)\right]^{(\mu)}}
\left(\frac{1}{x_{\nu}(s)-x_{\nu}(z)}-\frac{1}{x_{\nu}(s)-x_{\nu}(z-1-\mu)}\right) \\
 = & \displaystyle\sum_{s=a}^{b-1}\frac{\rho_{\nu}(s)\nabla x_{\nu+1}(s)}
 {\left[x_{\nu}(s)-x_{\nu}(z-1)\right]^{(\mu)}}
\frac{x_{\nu}(z)-x_{\nu}(z-1-\mu)}
{(x_{\nu}(s)-x_{\nu}(z))(x_{\nu}(s)-x_{\nu}(z-1-\mu))}  \\
= & \displaystyle\sum_{s=a}^{b-1}\frac{\rho_{\nu}(s)\nabla x_{\nu+1}(s)}
{\left[x_{\nu}(s)-x_{\nu}(z)\right]^{(\mu+2)}}
\left(x_{\nu}(z)-x_{\nu}(z-1-\mu)\right)
\end{split}
\]
Since $\:x(s)-x(s-t)=[t]_q\nabla x\left(s-\frac{t-1}{2}\right)$ we then have
\[
\begin{split}
\nabla_z\:\Phi_{\nu,\mu}(z)  = & \displaystyle\sum_{s=a}^{b-1}
\frac{\rho_{\nu}(s)\nabla x_{\nu+1}(s)}{\left[
x_{\nu}(s)-x_{\nu}(z)\right]^{(\mu+2)}}[\mu+1]_q\nabla
x_{\nu}\left(z-\frac{\mu}{2}\right) \\
= & [\mu+1]_q\nabla x_{\nu-\mu}(z)\Phi_{\nu,\mu+1}(z)
\end{split}
\]
which is (\ref{2.4}).
\end{proof}

{}From \refe{2.4} follows that
$$
\Delta_z\:\Phi_{\nu,\mu}(z)=[\mu+1]_ q\Delta x_{\nu-\mu}(z)\Phi_{\nu,\mu+1}(z+1).
$$
Next we prove the  following lemma that is the
discrete analog of the Lemma in \cite[page 14]{NU:88}.

\begin{lema}\label{lema1}.
Let $x(z)$ be $x(z)=q^z$ or $x(z)=z$. Then,
any three functions $\Phi_{\nu_i,\mu_i}(z)$, $i=1,2,3$, are connected
by a linear relation
\bq\label{rr-phi}
\sum_{i=1}^3 A_i(z)\Phi_{\nu_i,\mu_i}(z)=0,
\eq
with non-zero at the same time polynomial coefficients on $x(z)$,
$A_i(z)$, provided that the differences $\nu_i-\nu_j$ and
$\mu_i-\mu_j$, $i,j=1,2,3$, are integers and that the following
condition holds\footnote{In some cases this
condition is equivalent to the condition $x(s)^k\sigma(s)\rho_{\nu_0}(s)|_{s=a}^{s=b}=0$,
 $k=0,1,2,\ldots$. }
\bq
\label{con-phi-d}
\left.\frac{x^k(s)\sigma(s)\rho_{\nu_0}(s)}{[x_{\nu_0-1}(s)-x_{\nu_0-1}(z)]^{(\mu_0)}}
\right|_{s=a}^{s=b}=0,\quad k=0,1,2,\ldots ,
\eq
when the functions $\Phi_{\nu_i,\mu_i}$ are given by \refe{phi-d}
and
\bq
\label{con-phi-c}
\int_C \Delta_s \frac{x^k(s)\sigma(s)\rho_{\nu_0}(s)\, ds}
{[x_{\nu_0-1}(s)-x_{\nu_0-1}(z)]^{(\mu_0)}}=0,\quad
k=0,1,2,\ldots,
\eq
when $\Phi_{\nu_i,\mu_i}$ are given by \refe{phi-c}.
Here $\nu_0$ is the $\nu_i$, $i=1,2,3$, with the smallest
real part and  $\mu_0$ is the $\mu_i$, $i=1,2,3$, with the largest
real part.
\end{lema}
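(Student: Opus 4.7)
The plan is to extract a family of ``primitive'' identities from the vanishing condition \refe{con-phi-d} (resp.\ \refe{con-phi-c}) and then assemble any relation of the form \refe{rr-phi} out of them. I would first observe that \refe{con-phi-d} holds for every $k\ge 0$, so by taking $x(z)$-polynomial linear combinations of the $x^k(s)$ it follows that for any polynomial $\pi(x(s),z)$ of degree $k$ in $x(s)$ with coefficients polynomial in $x(z)$,
\begin{equation*}
\left.\frac{\pi(x(s),z)\,\sigma(s)\rho_{\nu_0}(s)}{\left[x_{\nu_0-1}(s)-x_{\nu_0-1}(z)\right]^{(\mu_0)}}\right|_{s=a}^{s=b}=0,
\end{equation*}
and hence
\begin{equation*}
\sum_{s=a}^{b-1}\Delta_s\!\left[\frac{\pi(x(s),z)\,\sigma(s)\rho_{\nu_0}(s)}{\left[x_{\nu_0-1}(s)-x_{\nu_0-1}(z)\right]^{(\mu_0)}}\right]=0,
\end{equation*}
with the analogous integral identity in the contour case furnished by \refe{con-phi-c}.

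Next I would expand the $\Delta_s$ by the discrete Leibniz rule, reduce the ratio $\sigma(s+1)\rho_{\nu_0}(s+1)/\rho_{\nu_0}(s)$ to $\phi_{\nu_0}(s)$ through Pearson's equation \refe{pearson}, and rewrite the shifted generalized powers $\left[x_{\nu_0-1}(s\pm 1)-x_{\nu_0-1}(z)\right]^{(\mu_0)}$ via items (2), (4), (5) of Proposition~\ref{prop1}. Any residual polynomial factor in $x(s)$ can then be expanded in the ``falling'' basis $\{\left[x_\nu(s)-x_\nu(z)\right]^{(j)}\}_{j\ge 0}$, whose change-of-basis coefficients are themselves polynomials in $x(z)$. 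After summing (or integrating) over $s$, the identity takes the form
\begin{equation*}
\sum_{(\nu,\mu)} A^{(\pi)}_{\nu,\mu}(z)\,\Phi_{\nu,\mu}(z) = 0,
\end{equation*}
a finite linear combination with $x(z)$-polynomial coefficients in which $\nu\in\{\nu_0-1,\nu_0\}$ and $\mu\le\mu_0$.

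Varying $\pi$ (and its degree $k$) produces a whole family of such primitive relations. Combined with the $\mu$-shift furnished by \refe{2.4} and the analogous relation obtained by running the same procedure with $\rho_{\nu_0+1}$ in place of $\rho_{\nu_0}$ (which moves the base $\nu$ upward), I would obtain enough identities to pass between any two integer-shifted pairs $(\nu,\mu)$. An induction on the $\ell^1$-distance $|\nu_i-\nu_0|+|\mu_i-\mu_0|$ then assembles the desired three-term identity \refe{rr-phi} connecting the prescribed triple $\Phi_{\nu_i,\mu_i}$, $i=1,2,3$.

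The principal obstacle I anticipate is bookkeeping: I must verify that at every stage the coefficients remain genuine polynomials in $x(z)$ and not rational functions with $x(z)$-dependent denominators, which depends on scrupulous application of items (2) and (4)--(5) of Proposition~\ref{prop1}; and I must check that the final combination is non-trivial, i.e.\ that the $A_i(z)$ do not all vanish identically. The latter should follow from a leading-degree analysis in $x(z)$ of the polynomials produced at the base step, where the top-degree contribution is controlled by the chosen $\pi$ and by $\widetilde{\sigma}$, $\widetilde{\tau}$.
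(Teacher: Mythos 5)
Your central mechanism --- telescoping $\Delta_s$ applied to $x^k(s)\sigma(s)\rho_{\nu_0}(s)/[x_{\nu_0-1}(s)-x_{\nu_0-1}(z)]^{(\mu_0)}$ against the boundary condition \refe{con-phi-d}, with the Pearson equation \refe{pearson} and items (4)--(5) of Proposition \ref{prop1} doing the simplification --- is exactly the paper's, but you run it in the opposite direction, and that is where the gaps appear. The paper fixes the prescribed triple, rewrites $\sum_i A_i(z)\Phi_{\nu_i,\mu_i}(z)$ as a single sum $\sum_s \rho_{\nu_0}(s)\nabla x_{\nu_0+1}(s)\,\Pi(s)/[x_{\nu_0}(s)-x_{\nu_0}(z)]^{(\mu_0+1)}$ with $\Pi$ linear in the unknown $A_i$, and then seeks one polynomial $Q(s)$ making the summand an exact $\Delta_s$ of the boundary expression; equating powers of $q^s$ in the functional equation \refe{eq-Pi-Q} produces a linear system with at least one more unknown (the coefficients of $Q$ together with $A_1,A_2,A_3$) than equations, and that single dimension count simultaneously yields the relation \emph{for the prescribed triple} and guarantees it is nontrivial. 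Your forward construction has to accomplish both of these by other means, and as written it does not.

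Concretely: (a) your primitive identity has the form $\sum_s \rho_{\nu_0}(s)R(x(s),z)/[x_{\nu_0}(s)-x_{\nu_0}(z)]^{(\mu_0+1)}=0$, and to read it as a polynomial-coefficient combination of the functions \refe{phi-d} you must express $R$ in the span of the products $\nabla x_{\nu+1}(s)\,[x_{\nu}(s)-x_{\nu}(z-j)]^{(j)}\,\rho_{\nu}(s)/\rho_{\nu_0}(s)$; on the lattice $x(s)=q^s$ every such product vanishes at $q^s=0$, whereas for generic $\pi$ the constant term of $R$ in $q^s$ is a nonzero multiple of $\pi(0,z)$ times the difference of the constant terms of $\sigma$ and $\phi_{\nu_0}$, so not every $\pi$ yields a usable relation, and for $\nu>\nu_0$ one additionally needs divisibility by the factors $\phi(s+\nu_0)\cdots$ from \refe{rho-vi-v0}; this is a constraint to be imposed, not bookkeeping. (b) The ``$\mu$-shift'' you want from \refe{2.4} is not available at fixed argument: that identity relates $\Phi_{\nu,\mu+1}(z)$ to $\Phi_{\nu,\mu}(z)-\Phi_{\nu,\mu}(z-1)$, so it drags in the shifted point $z-1$ and cannot be inserted into a relation of the form \refe{rr-phi} in which all functions are evaluated at the same $z$. (c) Even granting a supply of multi-term primitive relations, the elimination/induction that is supposed to leave exactly the three prescribed functions with not-all-vanishing polynomial coefficients is precisely the content of the lemma, and your appeal to a ``leading-degree analysis'' does not replace the unknowns-versus-equations count that the paper uses to close the argument. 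I would reorganize in the paper's direction: keep the $A_i$ as unknowns from the start and let the underdetermined system \refe{eq-Pi-Q} do the work.
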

\begin{proof}
Since in \cite{ran-luis} we have proved the case when $x(s)=s$ (the uniform lattice)
we will restrict here to the case of the $q$-linear lattice $x(s)=c_1q^s+c_2$).
Moreover, we will give the proof for the case of functions of the form
\refe{phi-d}, the other case is completely equivalent. Using the
identity
$$\nabla
x_{\nu_i+1}(s)=q^{\frac{\nu_i-\nu_0}{2}}\nabla x_{\nu_0+1}(s),
$$
as well as (3) of Proposition \ref{prop1}, we have
\begin{equation*}
\begin{split}
\sum_{i=1}^3 & A_i(z)\Phi_{\nu_i,\mu_i}(z)=
\sum_{i=1}^3 A_i(z) \sum_{s=a}^{b-1}\frac{\rho_{\nu_i}(s)\nabla
x_{\nu_i+1}(s)}{\left[x_{\nu_i}(s)-x_{\nu_i}(z)\right]^{(\mu_i+1)}}\\
&=\sum_{s=a}^{b-1} \sum_{i=1}^3  A_i(z) \frac{\rho_{\nu_i}(s)\nabla
x_{\nu_i+1}(s)}{\left[x_{\nu_i}(s)-x_{\nu_i}(z)\right]^{(\mu_i+1)}}=
\sum_{s=a}^{b-1}\frac{1}{\left[x_{\nu_0}(s)-x_{\nu_0}(z)\right]^{(\mu_0+1)}}\times\\
& \left(
\dst\sum_{i=1}^3 A_i(z) q^{\frac{(\mu_i+1)(\nu_0-\nu_i)}{2}}
\left[x_{\nu_0}(s)\!-\!x_{\nu_0}(z\!-\!\mu_i\!-1)\right]^{(\mu_0-\mu_i)}\rho_{\nu_i}(s)
\nabla x_{\nu_i\!+\!1}(s)\right)\:\\
&=\sum_{s=a}^{b-1}\frac{\rho_{\nu_0}(s)\nabla
x_{\nu_0+1}(s)}{\left[x_{\nu_0}(s)-
x_{\nu_0}(z)\right]^{(\mu_0+1)}}\times \\
&\quad \left(\dst\sum_{i=1}^3 A_i(z)
q^{\frac{\mu_i(\nu_0-\nu_i)}{2}}
\left[x_{\nu_0}(s)-x_{\nu_0}(z-\mu_i-1)\right]^{(\mu_0-\mu_i)}
\frac{\rho_{\nu_i}(s)}{\rho_{\nu_0}(s)}\right).
\end{split}
\end{equation*}
Using the Pearson-type equation \refe{pearson} we obtain
\bq
\label{rho-vi-v0}
\rho_{\nu_i}(s)=\phi(s+\nu_0)\phi(s+\nu_0+1)\ldots\phi(s+\nu_i-1)\rho_{\nu_0}(s),
\eq
so
$$\sum_{i=1}^3 A_i(z)\Phi_{\nu_i,\mu_i}(z)=\sum_{s=a}^{b-1}\frac{\rho_{\nu_0}(s)\nabla
x_{\nu_0+1}(s)}{\left[x_{\nu_0}(s)-
x_{\nu_0}(z)\right]^{(\mu_0+1)}}\Pi(s)$$
where
\begin{equation}\label{Id-Pi}
\ba{c}\dst \Pi(s)=\sum_{i=1}^3
A_i(z)q^{\frac{\mu_i(\nu_0-\nu_i)}{2}}
\left[x_{\nu_0}(s)-x_{\nu_0}(z-\mu_i-1)\right]^{(\mu_0-\mu_i)}\times\\[3mm]
\qquad\qquad \phi(s+\nu_0)\phi(s+\nu_0+1)\cdots\phi(s+\nu_i-1)\:.
\ea
\end{equation}
Let us show that there exists a polynomial $\,Q(s)\,$ in $x(s)$
(in general, $\,Q\equiv Q(z,s)\,$ is a function of $z$ and $s$)
such that
\bq
\begin{split}
\frac{\rho_{\nu_0}(s)\nabla x_{\nu_0+1}(s)}{\left[x_{\nu_0}(s)-
x_{\nu_0}(z)\right]^{(\mu_0+1)}}
\Pi(s)=&\Delta\left[\frac{\rho_{\nu_0}(s-1)}{\left[x_{\nu_0-1}(s)-
x_{\nu_0-1}(z)\right]^{(\mu_0)}}
Q(s)\right]\\ = &
\Delta\left[\frac{\sigma(s)\rho_{\nu_0}(s)}{\left[x_{\nu_0-1}(s)-
x_{\nu_0-1}(z)\right]^{(\mu_0)}}
Q(s)\right]\:.
\end{split}
\label{3.8}
\eq
If such polynomial exists, then, taking the sum in $s$ from $s=a$ to $b-1$
and using the boundary conditions
\refe{con-phi-d} we obtain \refe{rr-phi}.

To prove the existence of the polynomial $\,Q(s)\,$ in the variable $\,x(s)\,$
in \refe{3.8} we write
\[\begin{split}
& \frac{\sigma(s+1)\rho_{\nu_0}(s+1)}{\left[x_{\nu_0-1}(s+1)-x_{\nu_0-1}(z)\right]^{(\mu_0)}}
Q(s+1)-\frac{\sigma(s)\rho_{\nu_0}(s)}{\left[x_{\nu_0-1}(s)-x_{\nu_0-1}(z)\right]^{(\mu_0)}}
Q(s)=\\
& \frac{\rho_{\nu_0}(s)}{\left[x_{\nu_0}(s)-x_{\nu_0}(z)\right]^{(\mu_0+1)}}\mbox{$\left[
\sigma(s+1)\frac{\rho_{\nu_0}(s+1)}{\rho_{\nu_0}(s)}\frac{\left[x_{\nu_0}(s)-
x_{\nu_0}(z)\right]^{(\mu_0+1)}}{\left[x_{\nu_0-1}(s+1)-x_{\nu_0-1}(z)\right]^{(\mu_0)}}
Q(s+1)-\right.$} \\
& \hspace{10em}\mbox{$\left.\sigma(s)\frac{\left[x_{\nu_0}(s)-
x_{\nu_0}(z)\right]^{(\mu_0+1)}}{\left[
x_{\nu_0-1}(s)-x_{\nu_0-1}(z)\right]^{(\mu_0)}}Q(s)\right]$}\:.
\end{split}
\]
{}From (4) and (5) of Proposition \ref{prop1},
and using (\ref{pearson}), the above expression becomes
\[\begin{array}{l}
\frac{\rho_{\nu_0}(s)}{\left[x_{\nu_0}(s)-x_{\nu_0}(z)\right]^{(\mu_0+1)}}\left\{
\phi_{\nu_0}(s)\left[x_{\nu_0-\mu_0}(s)-x_{\nu_0-\mu_0}(z)\right]Q(s+1)-\right. \\ [1em]
\hspace{10em}\left.\sigma(s)\left[x_{\nu_0-\mu_0}(s+\mu_0)-x_{\nu_0-\mu_0}(z)
\right]Q(s)\right\}\:.
\end{array}\]
Thus
\bq\label{eq-Pi-Q}\begin{array}{l}
(\sigma(s)+\tau_{\nu_0}(s)\nabla
x_{\nu_0+1}(s))\left[x_{\nu_0-\mu_0}(s)-x_{\nu_0-\mu_0}(z)\right]Q(s+1)- \\ [1em]
\hspace{2em}\sigma(s)\left[x_{\nu_0-\mu_0}(s+\mu_0)-x_{\nu_0-\mu_0}(z)\right]Q(s)
=\nabla x_{\nu_0+1}(s)\Pi(s).
\end{array}
\eq
Since $\,\nabla x_{\nu_0+1}(s)\,$ is a polynomial of degree one in $\,x(s)\,$,
$\,x_k(s)\,$ and $\tau_{\nu_0}(s)$ are polynomials of degree at
most one in $\,x(s)$, and $\sigma(s)$ is a polynomial of degree at most
two in $x(s)$, we conclude that the degree of $Q(s)$ is, at least, two
less than the degree of $\Pi(s)$, i.e., $\deg Q\geq \deg \Pi-2$.
Moreover, equating the coefficients of the powers of $\,x(s)=q^s\,$ on the
two sides of the above equation \refe{eq-Pi-Q}, we find a system of linear equations
in the coefficients of $Q(s)$ and the coefficients $\,A_i(z)\,$ which have
at least one unknown more then the number of equations. Notice that
the coefficients of the unknowns are polynomials in $\,q^z\,$, so that
after one coefficient is selected the remaining coefficients
are rational functions of $\,q^z\,$, therefore after multiplying
by the common denominator of the $\,A_i(z)\,$ we obtain the
linear relation with polynomial coefficients on $\,x\equiv x(z)=q^z\,$.
This completes the proof.
\end{proof}

The above Lemma when $q\to1$ and $x(s)=s$ leads to the corresponding
result on the uniform lattice $x(s)$ \cite{ran-luis}.

\subsection{Some representative examples}

In the following examples, and for the sake of simplicity, we will
use the notation
\begin{equation}\label{exp-stp}
\sigma(s)=aq^{2s}+bq^s+c,\, \tau(s)=dq^s+e, \,
\phi_\nu(s)=\sigma(s)+\tau_{\nu-1}(s)\nabla x_{\nu}(s)=fq^{2s}+gq^s+h.
\end{equation}

\begin{exa}\label{Ex3.4}\
The following relation holds
$$
A_1(z)\Phi_{\nu,\nu-1}(z)+A_2(z)\Phi_{\nu,\nu}+A_3(z)\Phi_{\nu+1,\nu}(z)=0,
$$
\noindent where the coefficients $\,A_1\,$, $\,A_2\,$ and
$\,A_3\,$, are polynomials in $\,x\equiv x(z)=q^z\,,$ given by
\begin{small}
\[
\begin{split}
A_1(z)=& -eq^{\frac{\nu}{2}}+\frac{b+e\left(q^{\frac{1}{2}}-q^{-\frac{1}{2}}\right)}{
a+d\left(q^{\frac{1}{2}}-q^{-\frac{1}{2}}\right)}
\left(dq^{\frac{\nu}{2}}+a[\nu]_q\right)+\Big(dq^{\nu}+a[2\nu]_q\Big)q^{\frac{\nu}{2}+z}, \\
A_2(z)=& \frac{c\left(dq^{\nu}\!+\!a[2\nu]_q\right)}{a\!+\!d\left(q^{\frac{1}{2}}\!-\!q^{-\frac{1}{2}}\right)}+
\frac{b\!+\!e\left(q^{\frac{1}{2}}\!-\!q^{-\frac{1}{2}}\right)}{q^{\frac{1}{2}}\!-\!q^{-\frac{1}{2}}}
\!\left(q^{\nu}\!+\frac{a}{q^{\nu}\left(a\!+\!d\left(q^{\frac{1}{2}}\!-\!q^{-\frac{1}{2}}\right)\right)}
\right)\!q^z+\Big(dq^{\nu}\!+\!a[2\nu]_q\Big)q^{2z}, \\
A_3(z)=& -\frac{dq^{\frac{\nu}{2}}+a[\nu]_q}{
a+d\left(q^{\frac{1}{2}}-q^{-\frac{1}{2}}\right)},
\end{split}
\]
\end{small}
where 
$a$, $b$, $c$, $d$, and $e$,
are the coefficients of $\sigma$ and $\tau$ \refe{exp-stp}.
\end{exa}

\medskip
\begin{proof}
Using the notations of Lemma \ref{lema1} we have $\,\nu_1=\nu\,$, $\,\nu_2=\nu\,$,
$\,\nu_3=\nu+1\,$, $\,\mu_1=\nu-1\,$, $\,\mu_2=\nu\,$ and
$\,\mu_3=\nu\,$, thus $\,\nu_0=\nu\,$ and $\,\mu_0=\nu\,.$
By \refe{Id-Pi}
\begin{small}
\begin{equation}\label{Ex1-1}
\!\Pi(s)=A_1\!\left(q^{s+\frac{\nu}{2}}\!-\!q^{z-\frac{\nu}{2}}\right)+A_2+
A_3q^{-\frac{\nu}{2}}\!
\left[\left(a\!+\!d\left(q^{\frac{1}{2}}\!-\!q^{-\frac{1}{2}}\right)\right)\!q^{2\nu+2s}+
\left(b\!+\!e\left(q^{\frac{1}{2}}\!-\!q^{-\frac{1}{2}}\right)\right)\!q^{\nu+s}+c\right].
\end{equation}
\end{small}
On the other hand, from (\ref{eq-Pi-Q}) and because $\,Q(s)=k\,$
is a constant --notice that $\,\deg{(\Pi)}=2$-- we have
\begin{small}
\begin{equation}\label{Ex1-2}
\begin{array}{l}
\!\!\nabla
x_{\nu_0+1}(s)\Pi(s)=k\left\{\left[\left(a+d\left(q^{\frac{1}{2}}-q^{-\frac{1}{2}}\right)
\right)q^{2\nu+2s}+\left(b+e\left(q^{\frac{1}{2}}-q^{-\frac{1}{2}}\right)
\right)q^{\nu+s}+c\right]\left(q^{s}-q^{z}\right)-\right. \\ [1em]
\hspace{6em}\left.\left(aq^{2s}+bq^s+c\right)\left(q^{\nu+s}-q^z\right)\right\}
\end{array}
\end{equation}
\end{small}
where $\,k\,$ is an arbitrary constant.
\noindent Introducing (\ref{Ex1-1}) in (\ref{Ex1-2}), using the
identity
$$\,\nabla x_{\nu_0+1}(s)=q^{\frac{\nu}{2}}\left(q^{\frac{1}{2}}-
q^{-\frac{1}{2}}\right)q^s\,$$ and comparing the coefficients of
the powers of $\,x(s)=q^s\,$ we get a linear system of three
equations with four variables $\,A_1\,$, $\,A_2\,$, $\,A_3\,$ and
$\,k\,$. Choosing $\,k=1\,$ and solving the corresponding system
we get, after some simplifications, the coefficients $\,A_1\,$,
$\,A_2\,$ and $\,A_3.$ 
\end{proof}

In the next examples, since the technique is similar to the previous one
we will omit the details.

\begin{exa}
The following relation holds
$$
A_1(z)\Phi_{\nu,\nu}(z)+A_2(z)\Phi_{\nu,\nu+1}(z)+A_3(z)\Phi_{\nu+1,\nu+1}(z)=0\,,
$$
\noindent where the coefficients $\,A_1\,$, $\,A_2\,$ and
$\,A_3\,$, are polynomials in $\,x\equiv x(z)=q^z\,,$ given by
\[
\begin{array}{l}
A_1(z)=f\,\big(a-f\,{q}^{2\,\nu}\big)q^{z}+a\,g\,q-f\,b\,{q}^{\nu+1} \,, \\ [0.7em]
A_2(z)={q}^{-\frac{\nu}{2}-1}\,\left(a-f\,{q}^{2\,\nu}\right)\,\left(f\,{q}^{2\,z}+g\,{q}^{z+1}+h\,{q}^{2}\right)\,, \\ [0.7em]
A_3(z)=\sqrt{q}\,\left(a\,q-f\,{q}^{\nu}\right),
\end{array}
\]
where $a$, $b$, $c$, $f$, $g$ and $h$,
are the coefficients of $\sigma$ and $\phi_\nu$ \refe{exp-stp}.
\end{exa}

\begin{exa}\label{Ex3.9}\
The following relation holds
$$
A_1(z)\Phi_{\nu-1,\nu-1}(z)+A_2(z)\Phi_{\nu,\nu-1}(z)+A_3(z)\Phi_{\nu,\nu}(z)=0\,,
$$
\noindent where the coefficients $\,A_1\,$, $\,A_2\,$ and
$\,A_3\,$, are polynomials in $\,x\equiv x(z)=q^z\,,$ given by
\begin{small}
\[
\begin{split}
\!\!\!A_1(z)= & {q}^{-\frac{1}{2}-\nu}\Big\{\,f{q}^{2z}\Big[-{a}^{2}h{q}^{4}+agb{q}^{\nu+4}-
{q}^{2\nu+2}\left( a{g}^{2}q-2fah+f{b}^{2}\right) \\ &
\hspace{2em}-fgb{q}^{3\nu+1}\left({q}^{2}-q-1\right)+
f{q}^{4\nu}\left({g}^{2}\left(q-1\right)q-fh\right)\Big]+ \\ &
\hspace{0.3em}g{q}^{z+1}\Big[\!-{a}^{2}h{q}^{5}+a{q}^{\nu+2}\left(gb{q}^{3}\!+\!fh{q}^{2}\!-\!fh\right)-
{q}^{2\nu+2}\big(\left(fah+fgb+a{g}^{2}\right){q}^{2}- \\ &
\hspace{0.3em}f\!\left(2ah\!-\!{b}^{2}\!+\!gb\right)q\!-\!fah\big)\!+\!f{q}^{3\nu}\!\left({q}^{2}\!
\left({g}^{2}q\!-\!fh\!+\!gb\!-\!{g}^{2}\right)\!+\!fh\right)\!+\!{f}^{2}h{q}^{4\nu}\!
\left({q}^{2}\!\!-\!q\!-\!1\right)\!\Big] \\ &
\hspace{0.3em}-{a}^{2}{h}^{2}{q}^{6}+agh{q}^{\nu+5}\left(bq+gq-g\right)+fgh{q}^{3\nu+4}\left(gq+b-g\right)
-{f}^{2}\,{h}^{2}q^{4\nu+2} \\ &
\hspace{0.3em}-h{q}^{2\nu+3}\Big(a{g}^{2}{q}^{3}+fgb{q}^{2}+f{g}^{2}{q}^{2}-2fahq+
f{b}^{2}q-2f{g}^{2}q-fgb+f{g}^{2}\Big)\Big\}\,, \\
\!\!\!A_2(z)= & \left(q^{-\frac{\nu}{2}}\!-\!q^{\frac{\nu}{2}}\right)
\!\left(f{q}^{2z}\!+\!g{q}^{z+1}\!+\!h{q}^{2}\right)\!
\Big(fq^z\!\left(fq^{2\nu}\!\!-\!aq^2\right)\!+\!fq^{\nu+1}\!\left(gq\!+\!b\!-\!g\right)\!-\!agq^3\Big), \\
\!\!\!A_3(z)= & f\left(f{q}^{\nu}\!-\!aq\right)
\Big[\big(fq^{2z}\!+\!hq^2\big)\left(fq^{2\nu}\!-\!aq^2\right)+gq^{z+1}\Big(fq^{\nu}
\big({q}^{\nu}\!+\!q\!-\!1\big)-a{q}^{3}\Big)\Big],
\end{split}
\]
\end{small}
where $a$, $b$, $c$, $f$, $g$ and $h$,
are the coefficients of $\sigma$ and $\phi_\nu$ \refe{exp-stp}.
\end{exa}

\begin{exa}
The following relation holds
$$
A_1(z)\Phi_{\nu-1,\nu-1}(z)+A_2(z)\Phi_{\nu,\nu}(z)+A_3(z)\Phi_{\nu,\nu+1}(z)=0\,,
$$
\noindent where the coefficients $\,A_1\,$, $\,A_2\,$ and
$\,A_3\,$, are polynomials in $\,x\equiv x(z)=q^z\,,$ given by
\begin{small}
\[
 \begin{array}{l}
\!\!A_1(z)={a}^{\!2}h{q}^{4}\!-\!agb{q}^{\nu+3}\!+{q}^{2\nu+2}
\!\left(f{b}^{\!2}\!-\!2fah\!+\!a{g}^{2}\right)\!-\!fgb{q}^{3\nu+1}\!+\!{f}^{2}h{q}^{4\nu}, \\ [0.7em]
\!\!A_2(z)={q}^{-\frac{1}{2}}\left(f{q}^{\nu}-a{q}^{2}\right)
\left(f{q}^{z+2\nu}-a{q}^{z+2}+g{q}^{2\nu+1}-b{q}^{\nu+2}\right), \\ [0.7em]
\!\!A_3(z)=-{q}^{\frac{v-3}{2}}\left(f\,{q}^{2v}-a{q}^{2}\right)\left({q}^{v+1}-1\right)
\left(g{q}^{z+1}+f{q}^{2z}+h{q}^{2}\right),
\end{array}
\]
\end{small}
where $a$, $b$, $c$, $f$, $g$ and $h$,
are the coefficients of $\sigma$ and $\phi_\nu$ \refe{exp-stp}.
\end{exa}

\begin{exa}\label{Ex3.11}\
The relation
$$
A_1(z)\Phi_{\nu,\nu-1}(z)+A_2(z)\Phi_{\nu,\nu}(z)+A_3(z)\Phi_{\nu+1,\nu+1}(z)=0\,,
$$
\noindent is verified when the polynomial coefficients $\,A_1\,$,
$\,A_2\,$ and $\,A_3\,$, in the variable $\,x\equiv x(z)=q^z\,,$
are given by
\begin{small}
\[
\begin{array}{l}
\!A_1(z)={q}^{\frac{\nu+1}{2}}\Big(f{q}^{z+\nu}\left(f{q}^{2\nu}\!-\!g{q}^{\nu}\!+\!b\!-\!a\right)-
f\left(h\!-\!b\right){q}^{2\nu+1}-aq\left(g{q}^{\nu}\!-\!h\right)\!\Big)\,, \\ [0.7em]
\!A_2(z)={q}^{-z+\nu+\frac{1}{2}}\Big({q}^{z}\left(f{q}^{2\nu}-a\right)+{q}^{\nu}\left(g{q}^{\nu}-b\right)\Big)
\left(f{q}^{2z}+g{q}^{z+1}+h{q}^{2}\right)\,, \\ [0.7em]
\!A_3(z)={q}^{2z}\left(f{q}^{\nu}-aq\right)+
q^{z+\nu}\Big(q\left(g{q}^{\nu}-aq-b\right)-f{q}^{\nu}\left({q}^{\nu+1}-q-1\right)\Big)+ \\ [0.5em]
\hspace{2.2em}{q}^{\nu+1}\big(\left(h\!-\!b\right)q^{\nu+1}+g{q}^{\nu}-h\big),
\end{array}
\]
\end{small}
where $a$, $b$, $c$, $f$, $g$ and $h$,
are the coefficients of $\sigma$ and $\phi_\nu$ \refe{exp-stp}.
\end{exa}

\section{Recurrences involving the solutions $y_\nu$}

In \cite{suslov} the following relevant relation was established
\bq
\label{con-del-y-phi}
\Delta^{(k)} y_{\nu}(s)=\frac{C_\nu^{(k)}}{\rho_k(s)}
\Phi_{\nu\,,\,\nu-k}(s),
\eq
where
$$
C_\nu^{(k)}=C_\nu\prod_{m=0}^{k-1}\left[\alpha_q(\nu+m-1)
\widetilde\tau'+[\nu+m-1]_q\frac{\widetilde\sigma''}2 \right].
$$

This relation is valid for solutions of the form \refe{sol-d}
and \refe{sol-c} of the difference equation \refe{difeq-q}.

In the following, $y^{(k)}_n(s)$ denotes the $k$-th differences
$\Delta^{(k)} y_n(s)$.

\begin{teo} \label{teo-rec-y}
In the same conditions as in Lemma \ref{lema1},
any three functions $y^{(k_i)}_{\nu_i}(s)$, $i=1,2,3$, are connected
by a linear relation
\bq
\sum_{i=1}^3 B_i(s) y^{(k_i)}_{\nu_i}(s)=0,
\label{rr-y}
\eq
where the $B_i(s)$, $i=1,2,3$, are polynomials.
\end{teo}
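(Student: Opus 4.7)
The approach is to reduce Theorem \ref{teo-rec-y} directly to Lemma \ref{lema1} by using the identification \refe{con-del-y-phi}, which rewrites each $y^{(k)}_\nu(s)$ as a scalar multiple of $\Phi_{\nu,\nu-k}(s)$.

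First I would set $\mu_i:=\nu_i-k_i$ for $i=1,2,3$. Since the $k_i$ are non-negative integers and the $\nu_i-\nu_j$ are assumed to be integers, the differences $\mu_i-\mu_j=(\nu_i-\nu_j)-(k_i-k_j)$ are also integers, so the hypotheses of Lemma \ref{lema1} are met (the boundary or contour conditions being those transferred from the statement of Theorem \ref{teo-rec-y}). The lemma then yields polynomial coefficients $A_i(s)$ in $x(s)$, not all identically zero, such that
$$\sum_{i=1}^{3} A_i(s)\,\Phi_{\nu_i,\nu_i-k_i}(s)=0.$$

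Next, using \refe{con-del-y-phi} in the inverted form $\Phi_{\nu_i,\nu_i-k_i}(s)=\rho_{k_i}(s)\,y^{(k_i)}_{\nu_i}(s)/C_{\nu_i}^{(k_i)}$, one obtains
$$\sum_{i=1}^{3} A_i(s)\,\frac{\rho_{k_i}(s)}{C_{\nu_i}^{(k_i)}}\,y^{(k_i)}_{\nu_i}(s)=0.$$
The remaining task, and the main technical point, is to verify that after an appropriate normalization the coefficients of $y^{(k_i)}_{\nu_i}(s)$ are themselves polynomials in $x(s)$. Setting $k_0:=\min(k_1,k_2,k_3)$ and dividing through by $\rho_{k_0}(s)$, the relevant quantities are the analogues of \refe{rho-vi-v0}, namely
$$\frac{\rho_{k_i}(s)}{\rho_{k_0}(s)}=\phi(s+k_0)\,\phi(s+k_0+1)\cdots\phi(s+k_i-1),$$
obtained by iterating the Pearson-type equation \refe{pearson}. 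Since $\phi(s)=\sigma(s)+\tau(s)\Delta x(s-\tfrac{1}{2})$ is a polynomial in $x(s)$, each such ratio is a polynomial too. Defining
$$B_i(s):=A_i(s)\,\frac{\rho_{k_i}(s)}{\rho_{k_0}(s)}\cdot\frac{C_{\nu_0}^{(k_0)}}{C_{\nu_i}^{(k_i)}},$$
the $B_i(s)$ are polynomials in $x(s)$, not all identically zero (because the $A_i$ are not all zero and the $\rho$-ratios are nonzero polynomials), and they satisfy \refe{rr-y}.

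I expect the main obstacle to be little more than the careful bookkeeping above: matching the boundary or contour conditions across the two statements, and tracking the $\rho_{k_i}/\rho_{k_0}$ ratios to confirm polynomiality. Once those are in place the theorem is essentially a direct corollary of Lemma \ref{lema1}.
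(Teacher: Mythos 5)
Your proposal is correct and follows essentially the same route as the paper's own proof: apply Lemma \ref{lema1} with $\mu_i=\nu_i-k_i$, invert \refe{con-del-y-phi}, divide by $\rho_{k_0}(s)$ with $k_0=\min\{k_1,k_2,k_3\}$, and use the $\phi$-product identity \refe{rho-vi-v0} to see that the resulting coefficients are polynomials. If anything, your bookkeeping of the constants $C_{\nu_i}^{(k_i)}$ is slightly more careful than the paper's.
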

\begin{proof}
{}From Lemma \ref{lema1} we know that there exists three polynomials
$A_i(s)$, $i=1,2,3$ such that
$$
\sum_{i=1}^3 A_i(s) \Phi_{\nu_i,\nu_i-k_i}(s)=0,
$$
then, using the relation \refe{con-del-y-phi}, we find
$$
\sum_{i=1}^3 A_i(s)(C_\nu^{(k)})^{-1} \rho_{k_i}(s)
 y^{(k_i)}_{\nu_i}(s)=0.
$$
Now, dividing the last expression by $\rho_{k_0}(s)$, where
$k_0=\min\{k_1,k_2,k_3\}$, and using \refe{rho-vi-v0}
we obtain
$$
\sum_{i=1}^3 B_i(s) y^{(k_i)}_{\nu_i}(s)=0,\quad
B_i(s)=A_i(s)(C_\nu^{(k)})^{-1}\phi(s+k_0)\cdots\phi(s+k_i-1),
$$
which completes the proof.
\end{proof}

\begin{cor}
In the same conditions as in Lemma \ref{lema1}, the
following three-term recurrence relation holds
$$
A_1(s) y_{\nu}(s)+A_2(s) y_{\nu+1}(s) +A_3(s) y_{\nu-1}(s)=0,
$$
with polynomial coefficients  $A_i(s)$, $i=1,2,3$.
\end{cor}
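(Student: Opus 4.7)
The plan is to deduce the corollary as the direct specialization of Theorem \ref{teo-rec-y} in which all three derivative orders vanish. I would take $k_1=k_2=k_3=0$ together with $\nu_1=\nu$, $\nu_2=\nu+1$ and $\nu_3=\nu-1$. With these choices the corresponding $\mu_i=\nu_i-k_i=\nu_i$ of Lemma \ref{lema1} differ pairwise by $\pm 1$ or $0$, so the integrality hypothesis of Lemma \ref{lema1} on $\nu_i-\nu_j$ and $\mu_i-\mu_j$ is trivially satisfied; and the boundary/contour condition inherited from the hypothesis ``in the same conditions as in Lemma \ref{lema1}'' becomes the vanishing of \refe{con-phi-d} (or \refe{con-phi-c}) for $\nu_0=\nu-1$, the smallest of the $\nu_i$, and $\mu_0=\nu+1$, the largest of the $\mu_i$.

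Theorem \ref{teo-rec-y} then delivers a linear relation $\sum_{i=1}^{3}B_i(s)y^{(k_i)}_{\nu_i}(s)=0$ with polynomial $B_i(s)$. Because every $k_i$ is zero, the $(k_i)$-th differences reduce to $y_{\nu_i}(s)$ and the compensating factor $\phi(s+k_0)\cdots\phi(s+k_i-1)$ that appears in the proof of Theorem \ref{teo-rec-y} collapses to an empty product. Consequently each $B_i(s)$ is nothing but a nonzero constant multiple of the corresponding polynomial $A_i(s)$ produced by Lemma \ref{lema1}, so renaming $B_i$ as $A_i$ yields the stated three-term recurrence. Equivalently, one can bypass Theorem \ref{teo-rec-y} and apply Lemma \ref{lema1} directly to the triple $\Phi_{\nu-1,\nu-1}$, $\Phi_{\nu,\nu}$, $\Phi_{\nu+1,\nu+1}$, and then use \refe{con-y-phi} (which is \refe{con-del-y-phi} specialised to $k=0$) to convert the identity on the $\Phi$'s into one on the $y$'s after multiplying by $\rho(s)$ and absorbing the scalars $C_{\nu_i}^{-1}$ into the coefficients.

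I do not foresee any genuine obstacle beyond this bookkeeping, since the substantive work has already been carried out in Lemma \ref{lema1} and Theorem \ref{teo-rec-y}. The only step that merits attention in concrete examples (such as the $q$-classical polynomials treated in Section 5) is the verification of the boundary/contour condition for the specific values $\nu_0=\nu-1$ and $\mu_0=\nu+1$; for the standard orthogonality weights this reduces to the customary endpoint vanishing of $\sigma(s)\rho_{\nu-1}(s)$ and presents no difficulty.
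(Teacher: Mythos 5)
Your proposal is correct and coincides with the paper's own argument, which simply sets $k_1=k_2=k_3=0$, $\nu_1=\nu$, $\nu_2=\nu+1$, $\nu_3=\nu-1$ in Theorem \ref{teo-rec-y}. The extra bookkeeping you supply (identifying $\nu_0=\nu-1$, $\mu_0=\nu+1$, and noting that the $\phi$-products become empty) is consistent with, and slightly more explicit than, what the paper writes.
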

\begin{proof}
It is sufficient to put $k_1=k_2=k_3=0$, $\nu_1=\nu$, $\nu_2=\nu+1$
and $\nu_3=\nu-1$ in \refe{rr-y}.
\end{proof}

\begin{cor}
In the same conditions as in Lemma \ref{lema1}, the
following $\Delta$-ladder-type relation holds
\bq
B_1(s) y_{\nu}(s)+B_2(s) \frac{\Delta y_{\nu}(s)}{\Delta x(s)} +B_3(s) y_{\nu+m}(s)=0,
\qquad m\in\ZZ,
\label{rr-y-del}
\eq
with polynomial coefficients  $B_i(s)$, $i=1,2,3$.
\end{cor}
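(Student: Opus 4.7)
My strategy is a one-line reduction to Theorem \ref{teo-rec-y}, exactly parallel to the three-term recurrence corollary that immediately precedes this one. Specifically, I specialize the triple of indices $(\nu_i,k_i)_{i=1,2,3}$ in \refe{rr-y} to
\[
(\nu_1,k_1)=(\nu,0),\quad (\nu_2,k_2)=(\nu,1),\quad (\nu_3,k_3)=(\nu+m,0),
\]
so that $y^{(k_1)}_{\nu_1}(s)=y_\nu(s)$; $y^{(k_2)}_{\nu_2}(s)=\Delta y_\nu(s)/\Delta x(s)$, by the definition of the first-order difference derivative given in Section 2; and $y^{(k_3)}_{\nu_3}(s)=y_{\nu+m}(s)$. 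Substituting these three functions into \refe{rr-y} produces exactly the relation \refe{rr-y-del}.

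Before invoking the theorem I have to check the hypotheses inherited from Lemma \ref{lema1}, namely that all the differences $\nu_i-\nu_j$ and $\mu_i-\mu_j$, with $\mu_i:=\nu_i-k_i$, are integers. With the choice above one has $\mu_1=\nu$, $\mu_2=\nu-1$, $\mu_3=\nu+m$, so the pairwise differences lie in $\{0,\pm 1,\pm m,\pm(m+1)\}$, all integers by virtue of the assumption $m\in\ZZ$. The vanishing conditions \refe{con-phi-d} and \refe{con-phi-c} are in force by the phrase ``in the same conditions as in Lemma \ref{lema1}''.

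I do not foresee any substantive obstacle for this corollary. The construction of the polynomial $Q(s)$ that converts a $\Phi$-relation into a $y$-relation was already carried out once and for all in the proof of Lemma \ref{lema1}, and the passage from $\Phi_{\nu_i,\mu_i}$ to $y^{(k_i)}_{\nu_i}$ via \refe{con-del-y-phi} was handled inside the proof of Theorem \ref{teo-rec-y}. The only residual bookkeeping is to verify that the coefficients $B_i(s)$ produced by that theorem, through the formula $B_i(s)=A_i(s)\bigl(C_\nu^{(k)}\bigr)^{-1}\phi(s+k_0)\cdots\phi(s+k_i-1)$, remain polynomials: here $k_0=0$, so each $B_i$ picks up at most the single factor $\phi(s)$, which is itself a polynomial by the Pearson equation \refe{pearson}. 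Hence the $B_i(s)$ are polynomials, completing the argument.
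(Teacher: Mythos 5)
Your proof is correct and is essentially identical to the paper's: the authors also simply set $k_1=k_3=0$, $k_2=1$, $\nu_1=\nu_2=\nu$, $\nu_3=\nu+m$ in \refe{rr-y}. Your additional verification that the index differences are integers and that the $B_i(s)$ remain polynomial is sound bookkeeping that the paper leaves implicit.
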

\begin{proof}
It is sufficient to put $k_1=k_3=0$, $k_2=1$, $\nu_1=\nu_2=\nu$
and $\nu_3=\nu+m$ in \refe{rr-y}.
\end{proof}

Notice that for the case $m=\pm1$ \refe{rr-y-del} becomes
\bq
B_1(s) y_{\nu}(s)+B_2(s) \frac{\Delta y_{\nu}(s)}{\Delta x(s)}+B_3(s) y_{\nu+1}(s)=0,
\label{rr-y-del-rai}
\eq
\bq
\widetilde B_1(s) y_{\nu}(s)+\widetilde B_2(s) \frac{\Delta y_{\nu}(s)}{\Delta x(s)}+
\widetilde B_3(s) y_{\nu-1}(s)=0,
\label{rr-y-del-low}
\eq
with polynomial coefficients  $B_i(s)$ and $\widetilde B_i(s)$,
$i=1,2,3$. The above relations are usually called raising and
lowering operators, respectively, for the functions $y_\nu$.\\

Let us now obtain a raising and lowering operators
for the functions $y_\nu$ but associated to the $\nabla/\nabla x(s)$
operators.

We start applying the operator $\nabla/\nabla x(s)$ to \refe{con-y-phi}
\[\begin{split}
\frac{\nabla}{\nabla x(s)}y_{\nu}(s)=& \frac{\nabla}{\nabla x(s)}
\left[\frac{C_{\nu}}{\rho(s)}\Phi_{\nu,\nu}(s)\right]\\ =& \frac{1}{\nabla
x(s)}\left[C_{\nu}\Phi_{\nu\nu}(s)\left(\frac{1}{\rho(s)}-\frac{1}{\rho(s-1)}\right)+
\frac{C_{\nu}}{\rho(s-1)}\nabla\Phi_{\nu\nu}(s)\right],
\end{split}
\]
or, equivalently,
$$
\frac{\nabla\Phi_{\nu\nu}}{\nabla x(s)}=
\frac{\rho(s-1)}{C_{\nu}}\frac{\nabla y_{\nu}(s)}{\nabla
x(s)}-\frac{\Phi_{\nu\nu}(s)}{\nabla
x(s)}\left[\frac{\rho(s-1)}{\rho(s)}-1\right].
$$
By Lemma \refe{lema1} with $\,\nu_1=\mu_1=\nu_2=\nu\,,$ $\,\mu_2=\nu+1\,$
and $\nu_3=\mu_3=\nu+m$, there exist polynomial coefficients on $x(s)$, $A_i(s)$,
$i=1,2,3$, such that
$$
A_1(s)\Phi_{\nu,\nu}(s)+A_2(s)\Phi_{\nu,\nu+1}(s)+A_3(s)\Phi_{\nu+m,\nu+m}(s)=0.
$$
{}From \refe{2.4}
$$\Phi_{\nu,\nu+1}(s)=\frac{1}{[\nu+1]_q}\frac{\nabla
\Phi_{\nu,\nu}}{\nabla x(z)}=\frac{1}{[\nu+1]_q}\frac{\nabla
\Phi_{\nu,\nu}}{\nabla x(z)}.
$$
Therefore
\[\begin{split}
A_1(s)\Phi_{\nu,\nu}+\frac{A_2(s)}{[\nu+1]_q}&
\left[\frac{\rho(s-1)}{C_{\nu}}\frac{\nabla
y_{\nu}}{\nabla x(s)}-\frac{\Phi_{\nu\nu}(s)}{\nabla x(s)}\left(\frac{\rho(s-1)}{\rho(s)}-1
\right)\right]\\ & \qquad\qquad +A_3\Phi_{\nu+m,\nu+m}=0.
\end{split}
\]
Using now the Pearson equation \refe{pearson} and dividing by $\,\rho(s)\,$ we get
\[\begin{split}
A_1(s)y_{\nu}(s)+\frac{A_2(q)}{[\nu+1]_q}&\left[
\frac{\sigma(s)}{\phi(s-1)}\frac{\nabla
y_{\nu}}{\nabla x(s)}-\frac{y_{\nu}(s)}{\nabla x(s)}\left(\frac{\sigma(s)}{\phi(s-1)}-1
\right)\right]\\ & \qquad\quad +A_3\frac{C_{\nu}}{C_{\nu+m}}y_{\nu+m}(s)=0\:.
\end{split}
\]
Multiplying both sides by $\,[\nu+1]_q\phi(s-1)\,$,
\[\begin{split}
&A_1(s)[\nu+1]_q\phi(s-1)y_{\nu}(s) + A_2(s)\sigma(s)
\frac{\nabla y_{\nu}}{\nabla x(s)}-\\&
A_2(s)\frac{\sigma(s)-\phi(s-1)}{\nabla x(s)}y_{\nu}(s)
 +[\mu+1]_q C_{\nu}C_{\nu+m}^{-1}A_3\phi(s-1)y_{\nu+m}(s)=0.
\end{split}
\]
Thus we have proven the following
\begin{teo}
In the same conditions as in Lemma \ref{lema1}, the
following $\nabla$-ladder-type relation holds
\bq
C_1(s) y_{\nu}(s)+C_2(s) \frac{\nabla y_{\nu}(s)}{\nabla x(s)} +C_3(s) y_{\nu+m}(s)=0,
\qquad m\in\ZZ,
\label{rr-y-nab}
\eq
with polynomial coefficients  $C_i(s)$, $i=1,2,3$.
\end{teo}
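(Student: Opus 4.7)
The plan is to parallel the derivation of the $\Delta$-ladder relation, but use the backward operator $\nabla/\nabla x(s)$ at the critical step so that the emerging combination naturally contains $\nabla y_\nu(s)/\nabla x(s)$ rather than $\Delta y_\nu(s)/\Delta x(s)$. Starting from the identification $y_\nu(s)=(C_\nu/\rho(s))\Phi_{\nu,\nu}(s)$ given by (\ref{con-y-phi}), I would apply $\nabla/\nabla x(s)$ through the discrete Leibniz rule $\nabla(fg)(s)=f(s)\nabla g(s)+g(s-1)\nabla f(s)$. Solving for $\nabla\Phi_{\nu,\nu}(s)/\nabla x(s)$ expresses it as a linear combination of $y_\nu(s)$ and $\nabla y_\nu(s)/\nabla x(s)$ whose coefficients involve the ratio $\rho(s-1)/\rho(s)=\sigma(s)/\phi(s-1)$ (a consequence of the Pearson equation (\ref{pearson})).

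Next I would invoke Lemma~\ref{lema1} with the choice $\nu_1=\mu_1=\nu$, $\nu_2=\nu$, $\mu_2=\nu+1$, and $\nu_3=\mu_3=\nu+m$, producing polynomial coefficients $A_i(s)$ such that $A_1(s)\Phi_{\nu,\nu}(s)+A_2(s)\Phi_{\nu,\nu+1}(s)+A_3(s)\Phi_{\nu+m,\nu+m}(s)=0$. The identity (\ref{2.4}) with $\mu=\nu$ supplies $\Phi_{\nu,\nu+1}(s)=[\nu+1]_q^{-1}\,\nabla\Phi_{\nu,\nu}(s)/\nabla x(s)$ since $x_{\nu-\nu}(s)=x(s)$, while (\ref{con-y-phi}) gives $\Phi_{\nu+m,\nu+m}(s)=(\rho(s)/C_{\nu+m})\,y_{\nu+m}(s)$. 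Substituting both of these into the relation from Lemma~\ref{lema1}, inserting the expression for $\nabla\Phi_{\nu,\nu}/\nabla x(s)$ from the first step, dividing by $\rho(s)$, and finally multiplying by $[\nu+1]_q\,\phi(s-1)$ yields a relation of the form (\ref{rr-y-nab}) with
\[
C_1(s)=[\nu+1]_q\,\phi(s-1)\,A_1(s)-A_2(s)\,\frac{\sigma(s)-\phi(s-1)}{\nabla x(s)},\quad C_2(s)=A_2(s)\,\sigma(s),
\]
and $C_3(s)$ proportional to $A_3(s)\,\phi(s-1)$.

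The step I expect to be the main obstacle is verifying the polynomiality of $C_1(s)$ in $x(s)$, as the other two coefficients are manifestly polynomial. Since $\sigma$ has degree at most two in $x(s)$ and $\phi(s-1)=\sigma(s-1)+\tau(s-1)\nabla x(s-1/2)$, one must show that $\sigma(s)-\phi(s-1)=\nabla\sigma(s)-\tau(s-1)\nabla x(s-1/2)$ is an exact multiple of $\nabla x(s)$ whose quotient is polynomial in $x(s)$. For the linear-type lattices (\ref{red}) this reduces to two observations: $\nabla\sigma(s)/\nabla x(s)$ is polynomial in $x(s)$ (by the Nikiforov--Uvarov calculus on non-uniform lattices, since $\sigma$ is a polynomial of degree $\leq 2$ in $x(s)$), and $\nabla x(s-1/2)/\nabla x(s)$ is a constant ($q^{-1/2}$ in the $q$-linear case and $1$ in the linear case). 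Once this polynomiality is established, all three $C_i(s)$ are polynomials in $x(s)=q^s$ and (\ref{rr-y-nab}) follows.
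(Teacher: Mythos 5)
Your proposal follows essentially the same route as the paper: apply $\nabla/\nabla x(s)$ to \refe{con-y-phi}, invoke Lemma \ref{lema1} with $\nu_1=\mu_1=\nu_2=\nu$, $\mu_2=\nu+1$, $\nu_3=\mu_3=\nu+m$, convert $\Phi_{\nu,\nu+1}$ via \refe{2.4}, use the Pearson equation, and clear denominators by multiplying by $[\nu+1]_q\,\phi(s-1)$, arriving at the same coefficients $C_i(s)$. Your explicit verification that $\bigl(\sigma(s)-\phi(s-1)\bigr)/\nabla x(s)$ is polynomial in $x(s)$ is a correct and welcome addition that the paper leaves implicit.
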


Notice that for the case $m=\pm1$ \refe{rr-y-nab} becomes
\bq
C_1(s) y_{\nu}(s)+C_2(s) \frac{\nabla y_{\nu}(s)}{\nabla x(s)} + C_3(s) y_{\nu+1}(s)=0,
\label{rr-y-nab-rai}
\eq
\bq
\widetilde C_1(s) y_{\nu}(s)+\widetilde C_2(s) \frac{\nabla y_{\nu}(s)}{\nabla x(s)} y_{\nu}(s)+
\widetilde C_3(s) y_{\nu-1}(s)=0,
\label{rr-y-nab-low}
\eq
with polynomial coefficients  $C_i(s)$ and $\widetilde C_i(s)$,
$i=1,2,3$. The above relation are usually called raising and
lowering operators, respectively, for the functions $y_n$.
Eq. \refe{rr-y-nab-rai} was firstly obtained in \cite[Eq. (3.4)]{suslov}.

To conclude this section let us point that from formula \refe{con-del-y-phi}
and the examples \ref{Ex3.4}, \ref{Ex3.9}, and \ref{Ex3.11} follow the
relations
\begin{equation}\label{dif-rel3}
 \begin{split}
 & B_1(s) y^{(1)}_{\nu}(s)+B_2(s) y_{\nu}(s)+B_3(s) y^{(1)}_{\nu+1}(s)=0,\\
 & B_1(s) y^{(1)}_{\nu}(s)+B_2(s) y_{\nu-1}(s)+B_3(s) y_{\nu}(s)=0,\\
 & B_1(s) y^{(1)}_{\nu}(s)+B_2(s) y_{\nu}(s)+B_3(s) y_{\nu+1}(s)=0,
\end{split}
\end{equation}
respectively, being the last two expressions the lowering  and
raising operators for the functions $y_\nu$. Moreover, combining
the explicit values of $A_1$, $A_2$ and $A_3$ with formula \refe{con-del-y-phi}, one can
obtain the explicit expressions for the coefficients
$B_1$, $B_2$ and $B_3$ in \refe{dif-rel3}.

\section{Applications to $q$-classical polynomials}
In this section we will apply the previous results to the
$q$-classical orthogonal polynomials \cite{alv06,koor,medem-ran-paco} in order to show
how the method works. We first notice that these polynomials
are instances of the functions $y_\nu$ on the lattice $x(s)=q^s$
defined in \refe{sol-c}. In fact we have \cite{nsu,suslov}
\bq
P_n(x(s))= \frac{[n]_q! B_n}{\rho(s)\,\, 2\pi i}
\int_{C} \frac{\rho_n(z) \nabla x_{n+1}(z)}{ \left[ x_n(z)-x_n(s) \right]^{(n+1)} }dz,
\label{rep-int-q}
\eq
where $B_n$ is a normalizing constant, $C$ is a closed contour surrounding the points
$x=s,s-1,\dots,s-n$ and it is assumed that $\rho_n(s)=\rho(s+n) \prod_{m=1}^{n}\sigma(s+m)$
and $\rho_n(s+1)$ are analytic inside $C$ ($\rho$ is the solution of the Pearson equation \refe{pearson}), i.e.,
the condition \refe{con-c} holds.

A detailed study of the $q$-classical polynomials, including several
characterization theorems, was done in \cite{alv06,ks,medem-ran-paco}.
In particular, a comparative analysis of the $q$-Hahn tableau with
the $q$-Askey tableau \cite{ks} and Nikiforov-Uvarov tableau
\cite{nu} was done in \cite{ran-med}.
In the following we use the standard notation for the $q$-calculus \cite{gr}.
In particular by $(a;q)_k= \prod_{m=0}^{k-1}(1-aq^m)$,
we denote the $q$-analogue of the Pochhammer symbol.

Since the $q$-classical polynomials are
defined by \refe{rep-int-q} where the contour $C$ is closed and $\nu$
is a non-negative integer, then the condition \refe{con-phi-c}
is automatically fulfilled, so Lemma \ref{lema1} holds for all of them.
Moreover, the Theorem \ref{teo-rec-y} holds and there exist
the non vanishing polynomials $B_1$, $B_2$ and $B_3$ of \refe{rr-y}.

In the following we will assume that the three term recurrence relation is
known, i.e.,
\begin{equation}\label{Eq4.1}
\begin{array}{l}
x(s) P_n(x(s))=\alpha_n P_{n+1}(x(s))+\beta_nP_n(x(s))+\gamma_nP_{n-1}(x(s))=0, \quad
\;\;n\geq0 \\ [1em]
P_{-1}(x(s))=0\,,\;\;P_0(x(s))=1,\qquad x(s)=q^s.
\end{array}
\end{equation}
where the coefficients $\,\alpha_n\,$, $\,\beta_n\,$ and $\,\gamma_n\,$
can be computed using the coefficients $\,\sigma\,$, $\,\tau\,$ and
$\,\lambda\equiv \lambda_n\,$ of \refe{difeq-q}, being
$\,\lambda_n\,$ given by \refe{lambda-q} and \refe{q-num} with $\,\nu=n\,$.
For more details see, e.g., \cite{ran,medem-ran-paco}.

Since the TTRR and the differentiation formulas for the
$q$-polynomials are very well known (see e.g.
\cite{ks,medem-ran-paco,suslov}) we will obtain here two
recurrent-difference relations involving the
$q$-differences of the polynomials and
the polynomials themselves.

\subsection{The first difference-recurrece relation}

If we choose $\,\nu_1=n-1\,$, $\,\nu_2=n\,$, $\,\nu_3=n+1\,$,
$\,k_1=1\,$, $\,k_2=1\,$ and $\,k_3=0$, in Theorem
\ref{teo-rec-y} one gets
\begin{equation*}
A_1(s)\Delta^{(1)}P_{n-1}(x(s))+A_2(s)\Delta^{(1)}P_{n}(x(s))+
A_3(s)P_{n+1}(x(s))=0\,.
\end{equation*}
Using \cite[Eq. (6.14), page 193]{ran}
\begin{equation*}
[ \sigma(s) + \tau(s) \btu x(s-1/2)]\Delta^{(1)}P_{n}(x(s))  =
\widehat{\alpha}_n  P_{n+1}(x(s))  +
\widehat{\beta}_n P_{n}(x(s))  + \widehat{\gamma}_n  P_{n-1}(x(s)) ,
\end{equation*}
where
\begin{equation*}
\ba{c}
 \hspace{-.3cm} \displaystyle \widehat{\alpha}_n=   \displaystyle \frac{ \lambda_n}{[n]_q }
 \left[ q^{ -\frac{n}{2} } \alpha_n - \frac{B_{n}}{\tau'_n B_{n+1}} \right],
  \hspace{.3cm}
 \widehat{\beta}_n=  \displaystyle \frac{ \lambda_n}{[n]_q}
 \left[ q^{ -\frac{n}{2} } \beta_n +
 \frac{\tau_{n}(0)}{\tau'_n } - c_3(q^{-\frac{n}2}-1)\right], \\
\\
\quad  \widehat{\gamma}_n=    \displaystyle \frac{\lambda_n q^{-\frac{n}{2} }\gamma_n}{[n]_q},
\ea
\end{equation*}
to compute $\,\Delta^{(1)}P_{n}(x(s))=\frac{\Delta P_n(x(s))}{\Delta x(s)}\,$ we get
$$
\begin{array}{l}
\left[A_2(s)\frac{\lambda_n}{[n]_q}\left(q^{-\frac{n}{2}}\alpha_n-
\frac{B_n}{\tau_n^{\prime}B_{n+1}}\right)+\left(\sigma(s)+
\tau(s)\Delta x\left(s-\frac{1}{2}\right)\right)A_3(s)\right]P_{n+1}+ \\ [1em]
\left[A_1(s)\frac{\lambda_{n-1}}{[n-1]_q}\left(q^{-\frac{n-1}{2}}\alpha_{n-1}-
\frac{B_{n-1}}{\tau_{n-1}^{\prime}B_{n}}\right)+A_2(s)\frac{\lambda_n}{[n]_q}\left(
q^{-\frac{n}{2}}\beta_n+\frac{\tau_n(0)}{\tau_n^{\prime}}\right)\right]P_{n}+ \\ [1em]
\left[A_1(s)\frac{\lambda_{n-1}}{[n-1]_q}\left(q^{-\frac{n-1}{2}}\beta_{n-1}+
\frac{\tau_{n-1}(0)}{\tau_{n-1}^{\prime}}\right)+
A_2(s)\frac{\lambda_{n}q^{-\frac n 2}\gamma_n}{[n]_q}\right]P_{n-1}+ \\ [1em]
A_1(s)\frac{\lambda_{n-1}q^{-\frac{n-1}{2}}\gamma_{n-1}}{[n-1]_q}P_{n-2}=0\,,
\end{array}
$$
By (\ref{Eq4.1}) we may write
$$
P_{n-2}(x(s))=\frac{x(s)-\beta_{n-1}}{\gamma_{n-1}}
P_{n-1}(x(s))-\frac{\alpha_{n-1}}{\gamma_{n-1}}P_n(x(s))
$$
so the above equality becomes
\begin{equation}\label{Eq4.3}
\begin{array}{c}
\left[\frac{\lambda_n}{[n]_q}\left(q^{-\frac{n}{2}}\alpha_n-
\frac{B_n}{\tau_n^{\prime}B_{n+1}}\right)A_2(s)+\left(\sigma(s)+
\tau(s)\Delta x\left(s-\frac{1}{2}\right)\right)A_3(s)\right]P_{n+1}(x(s))+ \\ [1em]
\left[-\frac{\lambda_{n-1}}{[n-1]_q}\frac{B_{n-1}}{\tau_{n-1}^{\prime}B_{n}}A_1(s)+
\frac{\lambda_n}{[n]_q}\left(
q^{-\frac{n}{2}}\beta_n+\frac{\tau_n(0)}{\tau_n^{\prime}}\right)A_2(s)\right]P_{n}(x(s))+ \\ [1em]
\left[\frac{\lambda_{n-1}}{[n-1]_q}\left(\frac{\tau_{n-1}(0)}{\tau_{n-1}^{\prime}}+
q^{-\frac{n-1}{2}}x\right)A_1(s)+
\frac{\lambda_{n}}{[n]_q}q^{-\frac n 2}\gamma_nA_2(s)\right]P_{n-1}(x(s))=0\,.
\end{array}
\end{equation}
Comparing the above equation with the TTRR \refe{Eq4.1} one can obtain the
explicit values of $A_1$, $A_2$, and $A_3$.

\subsubsection{Some examples}

Since we are working in the $q$-linear lattice $x(s)= q^s$,
for the sake of simplicity, we will use the letter $x$ to denote the variable of
the polynomials \cite{ks,medem-ran-paco}. We will consider monic polynomials, i.e.,
those with the leading coefficient equal to 1. In the following we need the value of
$\tau_n(x)$ for each family, which can be computed using (\ref{tau_n}).

\subsubsection*{Al-Salam-Carlitz I $q$-polynomials}

\noindent For the  Al-Salam-Carlitz I monic polynomials $U_{n}^{(a)}(x;q)$ we have
(see \cite[see table 6.5, p.208]{ran} or \cite{medem-ran-paco})
$$
\begin{array}{l}
\sigma(x)=(1-x)(a-x)\,, \quad\tau_n(x)=\frac{q^{\frac {1-n}{2}}}{1-q}\big(x-(1+a)\big)\,,
\\ [0.5em]
\tau(x)=\tau_0(x)\,, \quad \lambda_n=-\frac{q^{\frac{3}{2}-n}\left(1-q^n\right)}{(1-q)^2}\,,
\end{array}
$$
and
$$
\alpha_n=1\,, \quad \beta_n=(1+a)q^n\,,\quad \gamma_n=-aq^{n-1}\left(1-q^n\right)\,.
$$
The constant $\,B_n\,$ is given by \cite[Eq. (5.57), p. 147]{ran},
$B_n=q^{\frac{1}{4}n(3n-5)}(1-q)^n$.
Introducing these values into the equation (\ref{Eq4.3}) it becomes
\begin{equation*}
\begin{split}
\left[q\left(q^{-\frac n 2}-1\right)A_2(x)+a(1-q)q^nA_3(x)\right]U_{n+1}^{(a)}(x;q)+ & \\
\left[q^{-\frac{n}{2}-\frac{5}{2}}A_1(x)+
q^{1+\frac n 2}(1+a)\left(1-q^{\frac n 2}\right)A_2(x)\right]U_{n}^{(a)}(x;q)+ & \\
\left[\left(q^{\frac{n+3}{2}}(1+a)-q^{2-n}x\right)A_1(x)+
aq^{n}\left(1-q^{n}\right)A_2(x)\right]U_{n-1}^{(a)}(x;q)=&\, 0.
\end{split}
\end{equation*}
Comparing with the TTRR (\ref{Eq4.1}) for the Al-Salam I polynomials we obtain
a linear system for getting the unknown coefficients $\,A_1\,$, $\,A_2\,$ and $\,A_3\,$
\begin{equation*}
\begin{array}{l}
q\left(q^{-\frac n 2}-1\right)A_2(x)+a(1-q)q^nA_3(x)=1\,, \\ [1em]
q^{-\frac{n}{2}-\frac{5}{2}}A_1(x)+
q^{1+\frac n 2}(1+a)\left(1-q^{\frac n 2}\right)A_2(x)=(1+a)q^n-x\,, \\ [1em]
\left(q^{\frac{n+3}{2}}(1+a)-q^{2-n}x\right)A_1(x)+
aq^{n}\left(1-q^{n}\right)A_2(x)=aq^{n-1}\left(q^n-1\right)\,.
\end{array}
\end{equation*}
The solution of the above system is
\begin{equation}\label{alsalam}
\begin{array}{l}
A_1(x)=\frac{aq^n\left(1+q^{\frac n 2}\right)\left((1+a)-
q^{-\frac n 2}x\right)}{aq^{-\frac 5 2}\left(1+q^{\frac n 2}\right)-
q(1+a)\left(q^{\frac{n+3}{2}}(1+a)-q^{2-n}x\right)}\,, \\ [1.3em]
A_2(x)=\frac{-aq^{-\frac 7 2}\left(1-q^{n}\right)-\big((1+a)q^n-x\big)
\left(q^{\frac 3 2}(1+a)-q^{2-\frac{3n}{2}}x\right)}{ \left(1-q^{\frac n 2}\right)
\left[aq^{-\frac 5 2}\left(1+q^{\frac n 2}\right)-q(1+a)
\left(q^{\frac{n+3}{2}}(1+a)-q^{2-n}x\right)\right]}\,, \\ [1.3em]
A_3(x)=\frac{a+q^{\frac{11}{2}-2n}x^2+q^{-\frac n 2}\big(a-(1+a)q^5x\big)}{
a(1-q)\left[aq^n+q^\frac{3n}{2}\big(a-(1+a)^2q^5\big)+
(1+a)q^{\frac{11}{2}}x\right]}\,.
\end{array}
\end{equation}
Then, the Al-Salam I $q$-polynomials satisfy the the following relation
\begin{equation}\label{Eq4.4}
A_1(x)\Delta^{(1)}U_{n-1}^{(a)}(x;q)+A_2(x)\Delta^{(1)}U_{n}^{(a)}(x;q)+
A_3(x)U_{n+1}^{(a)}(x;q)=0\,,
\end{equation}
where the coefficients $\,A_1\,$, $\,A_2\,$ and $\,A_3\,$ are  given by \refe{alsalam}.

Notice that the coefficients $\,A_1\,$, $\,A_2\,$ and $\,A_3\,$
are rational functions on $x$. Therefore, multiplying \refe{Eq4.4}
by and appropriate factor it becomes a linear relation with
polynomials coefficients.

\subsubsection*{Alternative $q$-Charlier polynomials}

\noindent In this case (see \cite[table 6.6, p.209]{ran})
$$
\begin{array}{l}
\sigma(x)=q^{-1}x(1-x)\,, \quad \tau_n(x)=
-\frac{q^{-\frac{n+1}{2}}}{1-q}\Big(\left(1+aq^{1+2n}\right)x-1\Big), \\[0.5em]
\tau(x)=\tau_0(x)\,, \quad 
\lambda_n=\frac{q^{\frac{1}{2}-n}\left(1-q^n\right)\left(1+aq^{n}\right)}{(1-q)^2}\,,
\end{array}
$$
and, for the monic case, $\alpha_n=1$
$$
\begin{array}{l} \beta_n=\dst \frac{q^n\Big(1+aq^{n-1}+aq^n-aq^{2n}\Big)}{
\left(1+aq^{2n-1}\right)\left(1+aq^{2n+1}\right)} \,,\quad
\dst \gamma_n=\frac{aq^{3n-2}\left(1-q^n\right)\left(1+aq^{n-1}\right)}{
\left(1+aq^{2n-2}\right)\left(1+aq^{2n-1}\right)^2\left(1+aq^{2n}\right)} \,.
\end{array}
$$
The corresponding normalizing constant $\,B_n\,$ is given by
$$
B_n=\frac{(-1)^nq^{\frac{1}{4}n(3n-1)}(1-q)^n}{\left(-aq^{n};q\right)_n}\,.
$$
Following the same procedure as before we obtain the following relation
for the alternative Charlier $q$-polynomials:
\begin{equation*}
\begin{array}{c}
A_1(x)\Delta^{(1)}K_{n-1}(x;a;q)+A_2(x)\Delta^{(1)}K_{n}(x;a;q)+
A_3(x)K_{n+1}(x;a;q)=0\,,
\end{array}
\end{equation*}
with the coefficients
\begin{small}
\begin{equation*}
\begin{split}
\!\!A_1(x)=& \frac{a\left(1+aq^{\frac n 2}\right)\left(\left(1+aq^{2n+1}
\right)x-q^{-\frac{n}{2}}\right)x}{q^2\left(1+aq^{2n-2}\right)
\left(1+aq^{2n-1}\right)\left(1+aq^{2n}\right)\left(1+aq^{2n+1}\right)} \,,
\\
\!\!A_2(x)=& \frac{-q^{\frac{3n+1}{2}}\left(1\!+\!aq^{n}\right)x\!+\!
\left(1\!+\!aq^{2n}\right)\left(q^{1+\frac{n}{2}}\left(1\!+\!aq^{2n\!+\!1}\right)+
aq^{2n\!+\!\frac{1}{2}}(1\!+\!q)+q^{\frac{3n}{2}}\left(1\!-\!aq^{2n}\right)\right)x^2}{
q^{3n}\left(1+aq^{n}\right)\left(1+aq^{2n}\right)\left(1+aq^{2n+1}\right)}-
\\ &
\frac{q^{\frac{3}{2}}\left(1+aq^{2n-1}\right)\left(1+aq^{2n+1}\right)x^3}{
q^{3n}\left(1+aq^{n}\right)\left(1+aq^{2n}\right)\left(1+aq^{2n+1}\right)} \,,
\\
\!\!A_3(x)= & \frac{q^{\frac{n+1}{2}}+aq^{2n}\left(q^{\frac{n}{2}}+1+
q^{\frac{1}{2}}\right)-q^{\frac{3}{2}}\left(1-aq^{\frac{3n}{2}}\right)
\left(1+aq^{2n-1}\right)x}{q^{\frac{9n}{2}}\left(1+aq^{n}\right)} \,.
\end{split}
\end{equation*}
\end{small}

\subsubsection*{Big $q$-Jacobi polynomials}


\noindent In this case (see \cite[see table 6.2, p.204]{ran} or \cite{medem-ran-paco})
$$
\begin{array}{l}
\sigma(x)=q^{-1}(x-aq)(x-cq)\,,\dst
\lambda_n=-q^{\frac{1}{2}-n}\frac{\left(1-abq^{1+n}\right)
\left(1-q^{n}\right)}{(1-q)^2}\,, \\ [0.5em]
\tau_n(x)=\dst\frac{q^{\frac {1-n}{2}}}{1-q}\left(\frac{1-abq^{2+2n}}{q}x+
a(b+c)q^{1+n}-(a+c)\right)\,,
\tau(x)=\tau_0(x)\,,
\end{array}
$$
and, for the monic case $\alpha_n=1$,
\begin{equation*}
\begin{split}
\beta_n= & \mbox{$\frac{c+a^2bq^n\Big((1+b+c)q^{1+n}-q-1\Big)+a\Big(
1+b+c-q^n\big(b(1+q)+c\big(1+q+b+bq-bq^{1+n}\big)\big)\Big)}{
q^{-1-n}\left(1-abq^{2n}\right)\left(1-abq^{2n+2}\right)}$} \,,\\
\gamma_n=& -\frac{a\left(1-q^n\right)\left(1-aq^n\right)\left(1-bq^n\right)
\left(1-cq^n\right)\left(c-abq^n\right)}{q^{-1-n}\left(1-abq^{2n-1}\right)
\left(1-abq^{2n}\right)^2\left(1-abq^{2n+1}\right)} \,.
\end{split}
\end{equation*}
The corresponding normalizing constant  is
$$
B_n=\frac{(1-q)^nq^{\frac{1}{4}n(3n-1)}}{\left(abq^{1+n};q\right)_n}.
$$
The big $q$-Jacobi polynomials satisfy the following relation
\begin{equation*}
\begin{split}
A_1(x)\Delta^{(1)}p_{n-1}(x;a,b,c;q)+A_2(x)\Delta^{(1)}p_{n}(x;a,b,c;q)+ &
\\
A_3(x)p_{n+1}(x;a,b,c;q) & =0\,,
\end{split}
\end{equation*}
with the coefficients $\,A_1\,$, $\,A_2\,$ and $\,A_3\,$ given by
\begin{footnotesize}
\begin{equation*}
\begin{array}{l}
A_1(x)=\frac{aq^{-\frac{1}{2}+n}\left(1-abq^{n+1}\right)
(1-x)(c-bx)\Big(c-(b+c)x+bx^2\Big)}{1-abq^{2n-1}}\times \\ [0.7em]
\hspace{0.8em}  \Bigg\{(1-q)q^{\frac{n}{2}}\big(1-abq^{2n+2}\big)\!
\left[\frac{c+a\Big(1+b+c+b(c+a(1+b+c))q^{2n+1}-
(c+b(1+a+c))q^n(1+q)\Big)}{q^{-(n+1)}\left(1-abq^{2n}\right)
\left(1-abq^{2n+2}\right)}-x\right]\!D(x)- \\ [0.7em]
(1-q)q^n\left(1-abq^{2n}\right)\Big[\left(1-abq^{2n}\right)
\left(-c+a\big(-1+(b+c)q^{n+1}\big)\right)+ \\ [0.7em]
 q^{\frac{n}{2}}\left(c+a\Big(1+b+c+b\big(c+a(1+b+c)\big)q^{2n+1}-
\big(c+b(1+a+c)\big)q^{n}(1+q)\Big)\right)N(x)\Big]\Bigg\}, \\ [1.3em]
A_2(x)=a(1-q)q^n\left(1-abq^{2n}\right)^2
\left(1-abq^{2n+2}\right)(1-x)(c-bx)\Big(c-(b+c)x+bx^2\Big)N(x), \\ [1.3em]
A_3(x)=\left(1-abq^{n+1}\right)\left(1-abq^{2n+2}\right)(1-x)(c-bx)D(x)+ \\ [0.6em]
\hspace{3em}q^{-1-\frac{n}{2}}\Big(1-q^{\frac{n}{2}}\Big)\left(1+abq^{1+\frac{3n}{2}}\right)
\left(1-abq^{2n}\right)^2\left(1-abq^{2n+2}\right)\Big(c-(b+c)x+bx^2\Big)N(x),
\end{array}
\end{equation*}
\end{footnotesize}
where the polynomials $\,N(x)\,$ and $\,D(x)\,$ are given by
\begin{small}
\[
\!\!\begin{array}{l}N(x)=
\frac{aq^2\left(1-q^{n}\right)\left(1-aq^{n}\right)\left(1-bq^{n}\right)\left(1-cq^{n}\right)
\left(c-abq^{n}\right)}{\left(1-abq^{2n}\right)^2\left(1-aq^{2n+1}\right)}-
\Big[\frac{q\big(-c+a(-1+(b+c)q^n)\big)}{1-abq^{2n}}+q^{\frac{1-n}{2}}x\Big]\times \\ [0.7em]
\hspace{2.8em}\left[\frac{c+a^2bq^n\big(-1-q+(1+b+c)q^{n+1}\big)+a\Big(
1-(b+c)\big(-1+q^n+q^{n+1}\big)-bcq^n\big(1+q-q^{n+1}\big)\Big)}{
q^{-n-1}\left(1-abq^{2n}\right)\left(1-abq^{2n+2}\right)}-x\right]
\end{array}
\]
\end{small}
and
\begin{small}
\[
\!\!\begin{array}{l}D(x)=
\frac{aq\left(1-q^{n}\right)\left(1-aq^{n}\right)\left(1-bq^{n}\right)\left(1-cq^{n}\right)
\left(c-abq^{n}\right)}{1-abq^{2n+1}}+\frac{1-q^{\frac{n}{2}}}{1-abq^{2n+2}}\times \\ [0.7em]
\left\{-c+a^2bq^{\frac{3n}{2}}\!\left(-1-q+(b\!+\!c)q^{n+1}\!-q^{1+\frac{n}{2}}\right)+
a\Big[-1+(b\!+\!c)\big(q^{\frac{n}{2}}+q^n+q^{n+1}\big)-\right. \\ [0.7em]
\left.bc\big(q^{\frac{3n}{2}}+q^{1+\frac{3n}{2}}+q^{2n+1}\big)\Big]
\Big[(c+a)q^{1+\frac{n}{2}}-a(b+c)q^{1+\frac{3n}{2}}-q^{\frac{1}{2}}\big(
1-abq^{2n}\big)x\Big]\right\},
\end{array}
\]
\end{small}
respectively.

\subsection{The second difference-recurrece relation}

If  we choose $\,\nu_1=n-1\,$,
$\,\nu_2=n\,$, $\,\nu_3=n+1\,$, $\,k_1=0\,$, $\,k_2=0\,$ and
$\,k_3=1\,$ in Theorem \ref{teo-rec-y}, and proceeding as in the previous case
one gets 
\begin{equation}\label{Eq4.13}
A_1(x)P_{n-1}(x;q)+A_2(x)P_{n}(x;q)+A_3(x)\Delta^{(1)}P_{n+1}(x;q)=0\,,
\end{equation}
where the coefficients $A_1$, $A_2$ and $A_3$, satisfy the linear relation
\begin{equation*}
\!\!\!\begin{array}{l}
A_3(x)\!\left[\left(q^{-\frac{n+1}{2}}-\!
\frac{B_{n+1}}{\alpha_{n+1}\tau_{n+1}^{\prime}B_{n+2}}\right)\!\big(x\!-\!\beta_{n+1}\big)+
\left(q^{-\frac{n+1}{2}}\beta_{n+1}+\frac{\tau_{n+1}(0)}{\tau_{n+1}^{\prime}}\right)\right]\!P_{n+1}+ \\ [1em]
\left[A_3(x)\frac{B_{n+1}}{\alpha_{n+1}\tau_{n+1}^{\prime}B_{n+2}}\gamma_{n+1}+
\left(\sigma(x)+\tau(x)\Delta x\left(s\!-\!\frac{1}{2}\right)\right)\frac{[n+1]_q}{\lambda_{n+1}}A_2(x)\right]P_{n}+ \\ [1em]
\left(\sigma(x)+\tau(x)\Delta x\left(s\!-\!\frac{1}{2}\right)\right)\frac{[n+1]_q}{\lambda_{n+1}}A_1(x)P_{n-1}
=0\,.
\end{array}
\end{equation*}
Comparing the above relation with the three-term recurrence relation \refe{Eq4.1} one
can obtain the explicit expressions for the coefficients $A_1$, $A_2$ and $A_3$ in
\refe{Eq4.13}.

\subsubsection{Some examples}

\subsubsection*{Al-Salam and Carlitz I polynomials}
Using the main data for the Al-Salam and Carlitz I polynomials we
obtain the relation
\begin{equation*}
A_1(x)U_{n-1}^{(a)}(x;q)+A_2(x)U_{n}^{(a)}(x;q)+A_3(x)\Delta^{(1)}U_{n+1}^{(a)}(x;q)=0
\end{equation*}
where
\begin{equation*}
\begin{split}
A_1(x)= & aq^{n-1}\left(1-q^{n}\right)x , \quad
A_2(x)=  \left[a\left(1+q^{\frac{n+1}{2}}\right)q^{n}-\Big((1+a)q^n-x\Big)x\right], \\
A_3(x)= & -a\frac{1-q}{1-q^{\frac{n+1}{2}}}q^{\frac{3n+1}{2}}.
\end{split}
\end{equation*}

\subsubsection*{Alternative $q$-Charlier polynomials}

In this case, one gets
\begin{equation*}
A_1(x)K_{n-1}(x;a;q)+A_2(x)K_{n}(x;a;q)+A_3(x)\Delta^{(1)}K_{n+1}(x;a;q)=0\,,
\end{equation*}
\begin{small}
\begin{equation*}
\begin{array}{l}
\!\!A_1(x)=\frac{a\big(1-q^n\big)\big(1+aq^{n-1}\big)
\left\{aq^n\big(1-q^{n+1}\big)+q^{-\frac{n+1}{2}}\big(1+aq^{2n+1}\big)
\Big[\big(1+aq^{n+1}\big)-q^{-\frac{n+1}{2}}\big(1+aq^{2n+2}\big)\Big]x\right\}}{
q^{2-3n}\big(1+aq^{2n-2}\big)\big(1+aq^{2n-1}\big)\big(1+aq^{2n}\big)} \,, \\ [1em]
\!\!A_2(x)=-x\left\{aq^n\big(1-q^{n+1}\big)+q^{-\frac{n+1}{2}}\big(1+aq^{2n+1}\big)
\Big[\big(1+aq^{n+1}\big)-q^{-\frac{n+1}{2}}\big(1+aq^{2n+2}\big)\Big]x\right\}+ \\ [0.5em]
\qquad\quad \frac{a^2q^{3n-1}\big(1\!-\!q^{n}\big)\big(1\!-\!q^{n+1}\big)+
q^{\frac{n-1}{2}}\big(1+aq^{n-1}+aq^{n}-aq^{2n}\big)\big(1+aq^{2n+1}\big)
\Big[\big(1+aq^{n+1}\big)-q^{-\frac{n+1}{2}}\!\big(1+aq^{2n+2}\big)\Big]x}{
\big(1+aq^{2n-1}\big)\big(1+aq^{2n+1}\big)}, \\ [1em]
\!\!A_3(x)=a(1-q)q^{\frac{n+1}{2}}\big(1+aq^{2n+1}\big)x^2
\end{array}
\end{equation*}
\end{small}

\subsection*{Concluding remarks}
In this paper we present a constructive approach for finding recurrence relations
for the hypergeo\-me\-tric-type functions on the linear-type
lattices, i.e., the solutions of the hypergeometric difference equation
\refe{difeq-q} on the linear-type lattices. Important instances of ``discret'' functions are
the celebrated Askey-Wilson polynomials and $q$-Racah polynomials.
Such functions are defined on the non-uniform lattice of the form
$x(s)= c_1(q) q^{s} + c_2(q) q^{-s} + c_2(q)$
with $c_1c_2\neq0$, i.e., a non linear-type lattice and therefore
they require a more detailed study (some preliminar general results can be found
in \cite{suslov}).

\subsection*{Acknowledgements}
The authors thank J. S. Dehesa and J.C. Petronilho for interesting discussions.
The authors were partially supported by DGES grants MTM2009-12740-C03;
PAI grant FQM-0262 (RAN) and Junta de Andaluc\'\i a grants P09-FQM-4643, Spain;
and CM-UTAD from UTAD (JLC).


\end{document}